\documentclass[draft,twoside,9pt,leqno]{amsart}
\usepackage{srcltx}
\usepackage[english]{babel}
\numberwithin{equation}{section}
\oddsidemargin=16pt
\evensidemargin=16pt
\topmargin=30pt

\newtheorem{thm}{Theorem}[section]

\newtheorem*{thmA}{Theorem A}
\newtheorem*{corB}{Corollary B}

\newtheorem*{thmA'}{Theorem A'}
\newtheorem*{thmB'}{Theorem B'}

\newtheorem*{thmA''}{Theorem A''}
\newtheorem*{thmB''}{Theorem B''}
\newtheorem{lem}[thm]{Lemma}
\newtheorem{pro}[thm]{Proposition}

\theoremstyle{definition}
\newtheorem{defi}[thm]{Definition}

\newtheorem{rmk}[thm]{Remark}

\numberwithin{equation}{section}

\usepackage{amssymb,amsmath}
\usepackage{mathtools}

\def\div{\mathop\mathrm{div}\nolimits}

\def\Lip{\mathop\mathrm{Lip}\nolimits}
\def\tr{\mathop\mathrm{tr}\nolimits}

\def\inn{\mathop\mathrm{int}\nolimits}

\newcommand{\supp}{\operatorname{supp}}
\newcommand{\dist}{\operatorname{dist}}

\newcommand{\R}{\mbox{${\mathbb R}$}}
\newcommand{\N}{\mathbb{N}}

\newcommand{\g}[2]{\langle #1 ,#2 \rangle}

\newcommand{\ep}{\varepsilon}
\newcommand{\rf}[1]{\mbox{(\ref{#1})}}

\newcommand{\ds}[1]{\displaystyle{#1}}

\newcommand{\eig}{\lambda_1^L}
\newcommand{\eigz}{\zeta_1^L}

\newcommand{\eigb}{\lambda_1^{L}}
\newcommand{\M}{(M,\g{\ }{\ })}
\newcommand{\dM}{(M,\partial M,\g{\ }{\ })}


\def\beq{\begin{equation}}
\def\eeq{\end{equation}}
\def\beqs{\begin{equation*}}
\def\eeqs{\end{equation*}}

\begin{document}

\title[Lichnerowicz-type equations on complete manifolds]{Lichnerowicz-type equations \\ with sign-changing nonlinearities \\ on complete manifolds with boundary}

\author[Guglielmo Albanese]{Guglielmo Albanese}
\author[Marco Rigoli]{Marco Rigoli}

\address{Dipartimento di Matematica, Universit\`{a} degli Studi di Milano, Via Saldini 50, I-20133, Milano, Italy}
\email {guglielmo.albanese@carloalberto.org, gu.albanese@gmail.com}
\email {marco.rigoli@unimi.it}


\date{\today}

\begin{abstract}
We prove an existence theorem for positive solutions to Lichnerowicz-type equations on complete manifolds with boundary $\dM$ and nonlinear Neumann conditions. This kind of nonlinear problems arise quite naturally in the study of solutions for the Einstein-scalar field equations of General Relativity in the framework of the so called Conformal Method.
\end{abstract}

\maketitle

\section{Introduction}
In this paper we continue the investigation programme started in \cite{AR} on the behaviour of positive solutions of Lichnerowicz-type equations on complete Riemannian manifolds. While in the aforementioned paper we concentrated on the case of complete manifolds without boundary $\M$, now we turn our attention to the case of complete manifolds $\dM$ with a nonempty boundary $\partial M$ (possibly noncompact). More precisely, we are interested in the problem
		\beq\label{lich}
		\begin{cases}
		\Delta u+a(x)u-b(x)u^{\sigma}+c(x)u^{\tau}= 0 & \hbox{on $\inn M$}\\
		\partial_{\nu}u-g(x,u)= 0 & \hbox{on $\partial M$},
		\end{cases}
		\eeq
where $\Delta$ is the Laplace-Beltrami operator, $\partial_{\nu}$ is the outer normal derivative, 
$\tau<1<\sigma$ are constants, the coefficients $a(x)$, $b(x)$, and $c(x)$ are in $\mathrm{C}^0(M)$, and $g(x,t)\in\mathrm{C}^0(\partial M\times\R^+)$ is a nonlinear function. Equation \rf{lich} is a particular case of a semilinear elliptic equation with a nonlinear Neumann boundary condition, that is,
		\beqs\label{nonlin}
		\begin{cases}
		\Delta u+f(x,u)= 0 & \hbox{on $\inn M$}\\
		\partial_{\nu}u-g(x,u)= 0 & \hbox{on $\partial M$},
		\end{cases}
		\eeqs
where $f(x,t)\in\mathrm{C}^0(M\times\R^+)$ and $g(x,t)\in\mathrm{C}^0(\partial M\times\R^+)$ model the nonlinearities. This kind of problems arise quite naturally in many branches of mathematics, for instance in studying conformal deformations of Riemannian manifolds with boundary \cite{Ch, Es, Esann}, finding optimal constants for Sobolev trace embeddings \cite{Ch, FdP, PT}, and reaction-diffusion equations \cite{BDL, dNdM}.\\

In particular, equations of the form \rf{lich} appear in the context of mathematical general relativity. Indeed, a fruitful way to look for solutions of the Einstein field equations consists in exploiting the conformal method introduced by Lichnerowicz \cite{Li} and York \cite{Yo1, Yo2}. This means that we generate an initial data set $\left(M,\widehat{g},\widehat{K},\widehat{\rho},\widehat{J}\right)$ satisfying the Einstein constraint equations by first choosing the following conformal data: 
	\begin{itemize}
	\item an $m$-dimensional Riemannian manifold $\M$; 
	\item a symmetric $2$-covariant tensor $\sigma$ required to be traceless and transverse with respect to $\g{\,}{\,}$, that is, for which $\tr_{\g{\,}{\,}}\sigma=0$ and $\div_{\g{\,}{\,}}\sigma=0$; 
	\item a scalar function $\tau$. 
	\end{itemize}
Then one looks for a positive function $u$ and a vector field $W$ that solve the conformal constraint system, that, in the case of an Einstein-scalar field (see \cite{CBIP, CB2, NX}) reads as
	\beq\label{confconI}
	\begin{cases}
	\frac{4(m-1)}{m-2}\Delta u-\left(S-\left|\nabla\psi\right|^2\right)u+\left(\left|\sigma+\mathring{\mathcal{L}}W\right|^{2}+\pi^2\right)u^{-N-1}\\ \quad\quad\quad\quad-\left(\frac{m-1}{m}\tau^2-2U(\psi)\right)u^{N-1}=0&\\
	\Delta_{\mathbb{L}}W+\frac{m-1}{m}u^N\nabla\tau-\pi\nabla\psi=0\,,&
	\end{cases}
	\eeq
where $\psi$ is the restriction on $M$ of a scalar field $\Psi$ defined in the whole spacetime, $U(t)$ its potential, and $\pi$ the restriction of its normal time derivative on $M$. Here $\Delta$, $S$, and $\left|\cdot\right|$ denote respectively the Laplace-Beltrami operator, the scalar curvature, and the norm in the metric $\g{\,}{\,}$. The operator $\mathring{\mathcal{L}}$ is the traceless Lie derivative and $\Delta_{\mathbb{L}}=\div\circ\mathring{\mathcal{L}}$ is the vector laplacian (see \cite{CB2}). The constant $N$ appearing in \rf{confconI} is the critical Sobolev exponent given by
	\beqs
	N=\frac{2m}{m-2}\,,
	\eeqs
for $m\geq3$, in particular we note that $(-N-1)<1<(N-1)$. If $\left(u,W\right)$ is a solution of \rf{confconI}  then a suitable rescaling of functions, fields, and potentials leads to a solution of the Einstein constraint equations (see \cite{CBIP, CB2}), thus in turn to Cauchy data for the Einstein equations. For further informations on the initial value problem for the Einstein equations we refer to the recent surveys \cite{BI, CB2, CGP}, and the references therein.

The scalar equation in \rf{confconI} is called the Lichnerowicz equation; since it is the main source of nonlinearity in the system \rf{confconI}, a good understanding of its solutions is a crucial step toward the resolution of the Einstein equations by the conformal method, see for instance \cite{CB2, CBIP, CM, GN, HM1, HPP, HV, HT1, Is1, Ma, MW, NX, Pr1, Pr1}. Here we generalize many of the results obtained in the above papers, especially relaxing the assumptions on $M$. Indeed, to the best of our knowledge, most of the results about Lichnerowicz equation are obtained on manifolds which are either compact or with \emph{asymptotically simple} ends (euclidean, hyperbolic, cylindrical, periodic,\dots) (see for instance \cite{CD, CM, GS}), cases that respectively physically correspond, to a cosmological solution or an isolated system. In this paper we enlarge significantly the family of admissible Riemannian manifolds; from the point of view of general relativity, this means a wider choice for the geometry of the initial data. We stress the fact that the knowledge of the precise behaviour at infinity (i.e. the \emph{asympotically simple} ends), allows the study of the Lichnerowicz equation in a classical analytical setting by means of suitably weighted Sobolev spaces (see for instance the very recent \cite{DIMM}). In our general case we need to use different techniques to tackle the problem.\\

Another natural issue in this framework, inspired by the classical singularity theorems of Hawking and Penrose, is to understand the behaviour of an initial data set containing event horizons. The approach introduced in \cite{YP} and which has been highly developed in recent years, see for instance \cite{HM, HT1, M1}, consists in excising the regions containing black holes and coherently impose some boundary conditions on the conformal factor. The boundary conditions introduced in the aforementioned references can be gathered into the following type of conditions
	\beq\label{genbound}
	\partial_{\nu}u+g_{H,j}u+g_{\theta ,j}u^{e_j}+g_{\tau ,j}u^{N\slash 2}+g_{w,j}u^{-N\slash 2}=0\quad\quad\quad\hbox{on $\partial_jM$}\,,
	\eeq
for each boundary component $\partial_jM$; where the coefficients $g_{\cdot,j}$ and $e_j$, are related to the physical meaning of that boundary component, see \cite{HT1} for a comprehensive exposition of the problem. This means that on a manifold with boundary $\dM$ the Lichnerowicz equation in \rf{confconI} has to be complemented with the boundary condition \rf{genbound}, that is,

	\beqs
	\begin{cases}
	\frac{4(m-1)}{m-2}\Delta u+A(x)u-B(x)u^{N-1}+C(x)u^{-N-1}=0 & \hbox{on $\inn M$}\,,\\
	\partial_{\nu}u+g_{H,j}u+g_{\theta ,j}u^{e_j}+g_{\tau ,j}u^{N\slash 2}+g_{w,j}u^{-N\slash 2}=0 & \hbox{on $\partial_jM$}\,,
	\end{cases}
	\eeqs
where the coefficients are given by
	\beq\label{ABC}
	\hbox{$A(x)=\left(\left|\nabla\psi\right|^2-S\right)$, $B(x)=\left(\frac{m-1}{m}\tau^2-2U(\psi)\right)$, and $C(x)=\left(\left|\sigma+\mathring{\mathcal{L}}W\right|^{2}+\pi^2\right)$.}
	\eeq

The discussion above motivates the study of positive solutions of the following special case of problem \rf{lich}
	\beqs
		\begin{dcases}
		\Delta u+a(x)u-b(x)u^{\sigma}+c(x)u^{\tau}=0 & \hbox{on $\inn M$,}\\
		 \partial_{\nu}u+\sum_{i=1}^{N}g_i(x)u^{q_i}=0& \hbox{on $\partial M$}; 
		\end{dcases}
	\eeqs
on a general $\dM$ with mild conditions on the coefficients $a(x)$, $b(x)$, $c(x)$, $g_i(x)$, $\sigma$, $\tau$, and $q_i$. It follows from \rf{ABC} that our preferred sign for the coefficient $c(x)$ is $c(x)\geq 0$, while the coefficients $a(x)$ and $b(x)$ have no sign restrictions. We stress the fact that the presence of a sign-changing $b(x)$ constitutes a difficulty in finding positive solutions. Thus, denoting by $b_+(x)$ and $b_-(x)$ respectively the positive and negative part of $b(x)$ (that is, $b(x)=b_+(x)-b_-(x)$), we introduce the function 
	\[
	b_{\theta}(x)=b_+(x)-\theta b_-(x)\,,
	\]
where $\theta\in(0,1]$. The function $b_{\theta}(x)$ is perturbation of $b(x)$ that permits to \emph{modulate} his negative part. Our main result is that, it is possible to find a small enough $\theta^*\in(0,1]$ such that the perturbed equation
	\beqs\label{licht}
		\Delta u+a(x)u-b_{\theta}(x)u^{\sigma}+c(x)u^{\tau}= 0 
	\eeqs 
has a positive solution for each $\theta\in(0,\theta^*]$. We note that this kind of perturbation is coherent with other results appeared in the literature, see for instance \cite{HV} and \cite{NX}. Moreover, recalling \rf{ABC}, in the case of Lichnerowicz equation for the Einstein-scalar field, the coefficient $b(x)$ is given by
	\[
	b(x)=\frac{m-2}{4}\left(\frac{1}{m}\tau^2-\frac{2}{m-1}U(\psi)\right)
	\]
thus, 
	\[
	0\leq \theta b_-(x)=\frac{\theta(m-2)}{2(m-1)}U_+(\psi)
	\]
so that the modulation $b_{\theta}(x)$ can be interpreted phisically as a smallness requirement for the positive part $U_+(\psi)$ of the potential.

Concerning the boundary conditions, the restrictions on $g_i(x)$ and $q_i$ that we have in mind are 
	\[
	\min_{1\leq i\leq N} q_i<1<\max_{1\leq i\leq N} q_i
	\]
and such that 
	\[
	g_i(x)\left(q_i-1\right)\geq 0\quad\quad\hbox{for all $i$;}
	\]
this last condition in the literature of mathematical General Relativity is known as the \emph{defocusing case} and it is meaningful for the applications, see for instance the aforementioned \cite{HT1, HM} and the references therein.\\

\section{The geometric setting and main results}
From now on $\dM$ will denote a smooth, complete Riemannian manifold of dimension $m\geq2$, and smooth boundary $\partial M$. It is worth to spend some words on the notion of completeness for a Riemannian manifold with boundary. Indeed in this case the familiar Hopf-Rinow theorem does not hold, because the presence of the boundary prevents the infinite extendability of geodesics. Thus the completeness of $\dM$ has to be understood in the sense of metric spaces. Here the distance between two points $p,q\in M$ is defined as usual as
	\beqs
	\dist(p,q)=\inf_{\sigma\in\Sigma^1_{p,q}}l(\sigma)
	\eeqs
where $\Sigma^1_{p,q}$ is the set of $\mathrm{C}^1$ paths starting at $p$ and ending at $q$, and $l(\sigma)$ is the lenght of $\sigma$ with respect to the metric $\g{\,}{\,}$. We denote with $B_r(x_0)$ the geodesic ball of radius $r\in\R^+$ and center $x_0\in M$, that is
	\[
	B_r(x_0):=\left\{x\in M : \dist(x,x_0)<r\right\}\,.
	\]
The topology on the manifold will be the relative topology, that is, a basis for the open sets is given by the metric balls $B_r(x_0)$ centered at any point of $M$, regardless of belonging or not to the boundary $\partial M$. To be clear, with this topology, the half ball
	\[
	B^+_r(o)=\left\{(x_1,\dots,x_{n-1},x_n)\in\R^n:\sqrt{\sum_{i=1}^nx_i^2}<r,\,x_n\geq0\right\}
	\]
is open in the manifold with boundary $\left(\R^n_+,\R^{n-1},\g{\,}{\,}_{eucl}\right)$. We will adopt the notation $\inn M=M\setminus\partial M$. Now let $\Omega\subset M$ be a domain. To deal with boundary value problems we need to split the boundary, $\partial\Omega$, in two different parts
	\[
	\partial_0\Omega=\partial\Omega\cap\inn M\,;\quad\quad\partial_1\Omega=\partial\Omega\cap\partial M\,,
	\]
clearly $\partial\Omega=\partial_0\Omega\cup\partial_1\Omega$.\\

Let $a(x)\in\mathrm{C}^0(M)$, here we introduce some spectral properties of the Schr\"odinger operator $L=\Delta+a(x)$ on subsets of $M$. If $\Omega$ is a non-empty open set, the first Zaremba eigenvalue $\eigz(\Omega)$ is variationally characterized as	
	\beq\label{eizvar}
	\eigz(\Omega)=\inf\left\{\int_{\Omega}\left|\nabla\varphi\right|^2-a(x)\varphi^2\,:\,\varphi\in\mathrm{W}_0^{1,2}(\Omega\cup\partial_1\Omega)\,,\int_{\Omega}\varphi^2=1\right\}\,,
	\eeq
where $\mathrm{W}_0^{1,2}(\Omega\cup\partial_1\Omega)$ denotes the closure of $\mathrm{C}_0^{1}(\Omega\cup\partial_1\Omega)$ in $\mathrm{W}^{1,2}(\Omega)$. It is clear that the definition above coincide with that of the usual first Dirichlet eigenvalue $\eig(\Omega)$ if $\partial_1\Omega=\emptyset$, in general, since $\mathrm{W}_0^{1,2}(\Omega)\subset\mathrm{W}_0^{1,2}(\Omega\cup\partial_1\Omega)$, it holds that
	\beq\label{dislz}
	\eigb(\Omega)\geq\eigz(\Omega)\,.
	\eeq 
We note that, for $\Omega\subset M$ such that $\partial\Omega$ is Lipschitz and $\partial_1\Omega$ is at least $\mathrm{C}^2$, the infimum is attained by the unique positive eigenfunction $v$ on $\Omega$ of the so-called Zaremba problem (or mixed Dirichlet-Neumann problem)
	\beq\label{eizar}
	\begin{cases}
	\Delta v+a(x)v+\eigz(\Omega)v=0 & \hbox{on $\Omega$}\\
	v=0 & \hbox{on $\partial_0\Omega$}\\
	\partial_{\nu}v=0 & \hbox{on $\inn\partial_1\Omega$}\,\\
	\left\|v\right\|_{\mathrm{L}^2(\Omega)}=1.
	\end{cases}
	\eeq
To the best of our knowledge, the existence theory for \rf{eizar} seems to be absent in the mathematical literature. A proof of the existence and regularity of solutions of \rf{eizar} is reported in the Appendix of this paper for the interested reader.

We need to extend the definition of $\eigz(\Omega)$ to an arbitrary bounded subset $B$ of $M$. We do this by setting
	\beq\label{eizB}
	\eigz(B)=\sup_{\Omega}\eigz(\Omega)
	\eeq
where the supremum is taken over all open bounded sets $B\subset\Omega\subset M$. This definition is well posed, due to the monotonicity of $\eigz$ with respect to the domain, that is
	\[
	\Omega_1\subseteq\Omega_2\quad\quad\hbox{implies}\quad\quad\eigz(\Omega_1)\geq\eigz(\Omega_2)\,.
	\] 
Observe that, in case $B\subset\subset\inn M$ (that is always true when $\partial M=\emptyset$), $\eigz(B)=\eig(B)$ (see for instance Section 6.12 of \cite{MRS}).\\

Now we can state the main results of the paper. The first is an existence theorem for positive solutions of a perturbed companion of \rf{lich} under general hypoteses on the boundary nonlinearity. In what follows we set $B_0=\left\{x\in M\,:\,b(x)\leq0\right\}$ and $C_0=\left\{x\in M\,:\,c(x)=0\right\}$. 
	\begin{thmA}
	Let $\dM$ be complete and suppose that $a(x)$, $b(x)$, $c(x)\in\mathrm{C}_{loc}^{0,\alpha}(M)$ for some $0<\alpha\leq1$, $c(x)\geq0$. Assume that $B_0$, $C_0$ are compact. Let $\sigma$, $\tau\in\R$ be such that $\tau<1<\sigma$. Let $g(x,t)\in\mathrm{C}^0(\partial M\times\R^+)$ satisfy
		\beq\label{condgall}
		\begin{dcases}
		i)\quad\quad\exists\, \overline{\omega}>0\,:\,\inf_{x\in\partial M}g(x,\omega)\geq0\quad\hbox{for all $0<\omega\leq\overline{\omega}$}\,;\\
		ii)\quad\quad\exists\, \overline{\gamma}>0\,:\,\sup_{x\in\partial M}g(x,\gamma)\leq0\quad\hbox{for all $\gamma\geq\overline{\gamma}$}\,;\\
		iii)\quad\quad\lim_{s\rightarrow 0^+}\frac{g(x,s)}{s}=+\infty\,;\\
		iv)\quad\quad\lim_{t\rightarrow\infty}\frac{g(x,t)}{t}=-\infty\,,
		\end{dcases}		
		\eeq
	and such that $\ds{\frac{g(x,t)}{t}}$ is non-increasing. Suppose
		\beq\label{condsupall}
		\begin{dcases}
		i)\quad\quad\eigz(B_0)>0\,;\\
		ii)\quad\quad\zeta_1^{\overline{L}}(C_0)>0\,;\\
		iii)\quad\quad\limsup_{x\rightarrow\infty}\frac{a_+(x)+c(x)}{b_+(x)}<+\infty\,;\\
		iv)\quad\quad\limsup_{x\rightarrow\infty}\frac{a_-(x)+b_+(x)}{c(x)}<+\infty\,,
		\end{dcases}		
		\eeq
	where $L=\Delta+a(x)$ and $\overline{L}=\Delta-a(x)$. Then there exists $\theta_0\in(0,1]$ such that for each $\theta\in(0,\theta_0]$ there exists $u\in \mathrm{C}^2(\inn M)\cap\mathrm{C}^0(M)\cap\mathrm{L}^{\infty}(M)$ positive solution of
	\beq\label{u+-}
	\begin{cases}
	\Delta u+a(x)u-b_{\theta}(x)u^{\sigma}+c(x)u^{\tau}= 0 & \hbox{on $\inn M$}\\
	\partial_{\nu}u-g(x,u)= 0 & \hbox{on $\partial M$},
	\end{cases}
	\eeq
	where $b_{\theta}(x)=b_+(x)-\theta b_-(x)$.
	\end{thmA}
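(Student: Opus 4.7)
The strategy is the classical method of sub- and super-solutions combined with a monotone iteration on an exhaustion of $M$ by relatively compact domains $\{\Omega_n\}$ with Lipschitz boundary and $\partial_1\Omega_n$ smooth. The plan is to produce an ordered pair $0<u_-\leq u_+$ with $u_\pm\in\mathrm{C}^0(M)$, where $u_-$ is a positive subsolution and $u_+$ a bounded positive supersolution of \rf{u+-}; then to solve on each $\Omega_n$ the mixed problem with Dirichlet datum $u_+$ on $\partial_0\Omega_n$ and the nonlinear Neumann condition on $\partial_1\Omega_n$ by monotone iteration starting at $u_+$; and finally to pass to the limit via Schauder estimates up to $\partial_1\Omega_n$ and a diagonal argument. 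The non-increasing character of $t\mapsto g(x,t)/t$ is precisely what allows a linear shift to linearize both the bulk and boundary reactions, reducing each step to a linear Zaremba problem whose solvability rests on the Appendix.

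For $u_+$ I split $M$ into a neighborhood of $B_0$ and its complement. Off $B_0$ one has $b_+>0$ and condition iii) of \rf{condsupall} bounds $(a_++c)/b_+$ uniformly at infinity; since $\sigma>1$, a sufficiently large constant $\Gamma$ gives a supersolution there, while condition ii) of \rf{condgall} forces $g(x,\Gamma)\leq 0$ on $\partial M$. On a slight open enlargement $\Omega$ of $B_0$ one has $b_\theta\leq 0$ and the superlinear damping disappears; the hypothesis $\eigz(B_0)>0$ furnishes a positive Zaremba eigenfunction $\phi$ of $L=\Delta+a$, and a multiple $\Lambda\phi$ serves as a local supersolution provided $\zeta_1^L(\Omega)$ dominates both $c\Lambda^{\tau-1}\phi^{\tau-1}$ (small for $\Lambda$ large, since $\tau<1$) and the residual $\theta b_-\Lambda^{\sigma-1}\phi^{\sigma-1}$ (small for $\theta$ small). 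Balancing these two inequalities fixes the threshold $\theta_0\in(0,1]$, and the global $u_+\in\mathrm{L}^\infty(M)$ is then obtained via the min-of-supersolutions pasting.

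The subsolution $u_-$ is constructed in a perfectly symmetric way. Off the compact set $C_0$ one has $c>0$ and condition iv) of \rf{condsupall} bounds $(a_-+b_+)/c$ uniformly at infinity; since $\tau<1$, a sufficiently small constant $\varepsilon$ gives a subsolution there, while conditions i) and iii) of \rf{condgall} yield $g(x,\varepsilon)\geq 0$. On a neighborhood of $C_0$, $c$ vanishes and the sublinear source disappears; here $\zeta_1^{\overline{L}}(C_0)>0$ supplies a positive Zaremba eigenfunction of $\overline{L}=\Delta-a$ whose small rescaling furnishes the local subsolution. Gluing by the max-of-subsolutions lemma yields a global $u_-$, and a further reduction of $\varepsilon$ arranges $u_-\leq u_+$ throughout $M$.

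The principal difficulty is the joint spectral handling of $B_0$ and $C_0$: on these compact sets the super- respectively sub-linear reactions vanish and the hypotheses \rf{condsupall}(i)-(ii) must carry the entire construction, which is exactly what forces the $\theta$-modulation. A secondary technical subtlety is the compatibility of the monotone iteration with the nonlinear Neumann condition, controlled by the monotonicity of $g(x,t)/t$ together with conditions iii)-iv) of \rf{condgall}; the remaining ingredients are standard elliptic machinery and the existence/regularity theory for the Zaremba problem developed in the Appendix.
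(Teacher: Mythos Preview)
Your overall architecture---build a bounded positive supersolution, a positive subsolution below it, then run a monotone iteration on an exhaustion---matches the paper. But two steps in your construction do not go through as written.

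\textbf{The gluing of the supersolution.} You propose to take a large multiple $\Lambda\phi$ of the Zaremba eigenfunction on a neighbourhood $\Omega\supset B_0$ and paste it with a large constant $\Gamma$ via the min-of-supersolutions lemma. This fails for two related reasons. First, $\phi$ vanishes on $\partial_0\Omega$, so near that set the term $c(x)(\Lambda\phi)^{\tau-1}$ blows up (recall $\tau<1$ and $c$ need not vanish there); hence $\Lambda\phi$ is \emph{not} a supersolution on all of $\Omega$, only on a compact subset where $\phi$ is bounded below. Second, even ignoring that, the min-glued function equals $\Lambda\phi\to 0$ near $\partial_0\Omega$, so the global supersolution is not bounded away from zero and you cannot arrange $u_-\leq u_+$ later. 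The paper avoids both problems by a smooth interpolation $u=\eta\bigl(\psi u_1+(1-\psi)\mu\bigr)$ with a cutoff $\psi$ supported in a larger set $D\supset\supset D'\supset\supset B_0$; the eigenfunction estimate is used only on $\overline{D'}$ where $u_1\geq\inf_{D'}u_1>0$, the constant $\eta\mu$ handles $M\setminus D$, and the annulus $D\setminus\overline{D'}$ is estimated separately using that $b>0$ there. The resulting $u_+$ is $\mathrm{C}^2$ and satisfies $\inf_M u_+>0$.

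\textbf{The subsolution.} Your ``symmetric'' construction takes a small multiple $\lambda\psi$ of the Zaremba eigenfunction of $\overline{L}=\Delta-a$ on a neighbourhood of $C_0$. But from $\Delta\psi-a\psi=-\zeta\psi$ one gets $\Delta(\lambda\psi)+a(\lambda\psi)=\lambda\psi(2a-\zeta)$, which has no reason to be nonnegative; so $\lambda\psi$ is not a subsolution of the original equation. The paper does something genuinely different: it observes the duality
\[
\overline{a}=-a,\quad \overline{b}=c,\quad \overline{c}=b_+,\quad \overline{\sigma}=2-\tau,\quad \overline{\tau}=2-\sigma,\quad \overline{g}(x,t)=-t^2 g\!\left(x,\tfrac{1}{t}\right),
\]
under which a \emph{supersolution} $v$ of the barred problem yields a \emph{subsolution} $u^-=1/v$ of \rf{lich+} (and hence of \rf{u+-}). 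Conditions \rf{condsupall}\,ii),\,iv) and \rf{condgall}\,i),\,iii) are exactly the hypotheses of the supersolution proposition for the barred problem, so one reuses the supersolution machinery verbatim. Finally, to place $u^-$ below $u^+$ the paper rescales $u_s=su^-$ for $s\in(0,1)$ and checks, using the monotonicity of $g(x,t)/t$, that $u_s$ remains a subsolution; then takes $s$ small enough that $u_s<\inf_M u_+$.

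Once you repair the supersolution with the smooth cutoff and replace the subsolution by the $1/v$ duality, the rest of your outline (exhaustion plus monotone iteration, with elliptic estimates and a diagonal argument) is exactly what the paper does.
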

	\begin{rmk}
	Conditions \rf{condsupall} $i)$ and $ii)$ can be thought essentially as a smallness condition for the sets $B_0$ and $C_0$, at least in a spectral sense. This was observed for the Yamabe-type equations on manifolds without boundary some time ago and it is for instance briefly reported on page 159 of \cite{MRS}. Of course, this condition also depends on the behaviour of $a(x)$, therefore \emph{small in a spectral sense} does not necessarily mean, for instance, \emph{small in a Lebesgue measure sense}. In the case of empty boundary $\partial M=\emptyset$, this is clear if $a(x) \leq 0$ because $\lambda^{\Delta}_1(M) \geq 0$ in any complete manifold $\M$ with $\partial M=\emptyset$ so that in this case $\lambda^{L}_1(M) \geq 0$ and thus $\lambda^{L}_1(B) > 0$ on any bounded set $B\subset M$. 
	
	In the same vein it should be reported the $\lambda_{\delta}$ eigenvalue introduced in the recent work by Dilts and Maxwell, \cite{DM}.
	\end{rmk}
The second result concerns a class of nonlinearities $g(x,t)$ that we have in mind in view of applications.  We consider the following \emph{simple} nonlinearity
	\beq\label{gsum}
	g(x,t)=\sum_{i=1}^{\nu}g_i(x)t^{q_i}
	\eeq
where $g_i\in\mathrm{C}^0(\partial M)$ and $q_i\in\R$ are such that $q_1<q_2<\dots<q_{\nu}$. We say that $g(x,t)$ in \rf{gsum} is a \emph{strongly defocusing nonlinearity} if the following conditions are fullfilled
	\beq\label{defoc}
	\begin{dcases}
	i)\quad\quad \hbox{$(q_i-1)g_i(x)\leq 0$ for all $1\leq i\leq \nu$;}\\\\
	ii)\quad\quad 
	\begin{aligned}
	& \hbox{$\exists\,k$ s.t. $q_{k}>1$, $g_{k}(x)\neq 0$, on $\partial M$}\\ 
	& \hbox{and $\frac{g_i(x)}{g_k(x)}\in\mathrm{L}^{\infty}(\partial M)$ for all $i$ s.t. $q_i\leq 1$;}
	\end{aligned}\\\\
	iii)\quad\quad 
	\begin{aligned}
	& \hbox{$\exists\,h$ s.t. $q_{h}<1$, $g_{h}(x)\neq 0$, on $\partial M$}\\ 
	& \hbox{and $\frac{g_i(x)}{g_h(x)}\in\mathrm{L}^{\infty}(\partial M)$ for all $i$ s.t. $q_i\geq 1$.}
	\end{aligned}
	\end{dcases}
	\eeq
In this case Theorem A yields the following consequence:
	\begin{corB}
	Let $\dM$ be complete and suppose that $a(x)$, $b(x)$, $c(x)$, $\sigma$, and $\tau$ are as in Theorem A. Let $g(x,t)$ be as in \rf{gsum}, satisfying conditions \rf{defoc}. Furthermore assume that the conditions in \rf{condsupall} are satisfied.
	Then there exists $\theta_0\in(0,1]$ such that for each $\theta\in(0,\theta_0]$ there exists $u\in \mathrm{C}^2(\inn M)\cap\mathrm{C}^0(M)\cap\mathrm{L}^{\infty}(M)$ positive solution of \rf{u+-}.
	\end{corB}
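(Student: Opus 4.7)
The plan is to derive Corollary~B as a direct consequence of Theorem~A by showing that any strongly defocusing nonlinearity \rf{gsum} satisfies the four boundary hypotheses \rf{condgall} together with the monotonicity requirement that $t\mapsto g(x,t)/t$ be non-increasing; once this is verified, the existence statement is just an invocation of Theorem~A (all other hypotheses of Theorem~A are assumed in Corollary~B).

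First I would dispatch the monotonicity: differentiating
\[
\frac{g(x,t)}{t}=\sum_{i=1}^{\nu}g_i(x)t^{q_i-1}
\]
in $t$ produces $\sum_{i}(q_i-1)g_i(x)t^{q_i-2}$, which is non-positive term by term by condition $i)$ of \rf{defoc}. Next, for the pointwise limits $iii)$ and $iv)$ of \rf{condgall} I would, at each fixed $x\in\partial M$, isolate the summand with smallest, respectively largest, exponent among the indices for which $g_i(x)\neq 0$. Conditions $iii)$ and $ii)$ of \rf{defoc} ensure that such extremal indices exist and that their exponents lie strictly below $1$, respectively strictly above $1$, so by \rf{defoc}~$i)$ the associated coefficients are strictly positive, respectively strictly negative. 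That extremal term then dominates the rest as $s\to 0^+$ (resp.\ $t\to\infty$), yielding the required $+\infty$ and $-\infty$ limits.

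The delicate step, which I expect to be the only real obstacle, is promoting the pointwise limits above to the \emph{uniform} sign conditions $i)$ and $ii)$ of \rf{condgall}, i.e.\ producing thresholds $\overline{\omega}$, $\overline{\gamma}$ independent of $x\in\partial M$. This is exactly where the $\mathrm{L}^{\infty}(\partial M)$ hypotheses on the ratios $g_i/g_h$ (for $q_i\geq 1$) and $g_i/g_k$ (for $q_i\leq 1$) encoded in \rf{defoc} become essential. Writing
\[
g(x,\omega)=g_h(x)\omega^{q_h}+\sum_{\substack{q_i<1\\ i\neq h}}g_i(x)\omega^{q_i}+\sum_{q_i\geq 1}g_i(x)\omega^{q_i},
\]
the middle sum is non-negative by \rf{defoc}~$i)$, and each term in the last sum is bounded in absolute value by $Cg_h(x)\omega^{q_i}$ with $q_i>q_h$, where $C=\max_{q_i\geq1}\|g_i/g_h\|_{\mathrm{L}^{\infty}(\partial M)}$. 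For $\omega$ smaller than a threshold depending only on $C$, $\nu$ and the exponents $\{q_i\}$, one obtains $g(x,\omega)\geq\tfrac12 g_h(x)\omega^{q_h}\geq 0$ uniformly in $x$, which is condition $i)$ of \rf{condgall}. The symmetric argument, isolating $g_k(x)t^{q_k}$ and applying the ratio bound for $q_i\leq 1$, yields the uniform upper bound $ii)$. With this, every hypothesis of Theorem~A is in place and Corollary~B follows at once.
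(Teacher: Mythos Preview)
Your proposal is correct and follows essentially the same route as the paper: verify monotonicity of $g(x,t)/t$ via termwise differentiation using \rf{defoc}~$i)$, then factor out the distinguished term $g_k(x)t^{q_k}$ (resp.\ $g_h(x)\omega^{q_h}$) and control the remainder by the $\mathrm{L}^{\infty}$ ratio bounds in \rf{defoc}~$ii)$--$iii)$ to get the uniform sign thresholds, exactly as you do. The only cosmetic difference is that the paper spells out conditions \rf{condgall}~$ii)$ and $iv)$ in detail and dismisses $i)$, $iii)$ as analogous, whereas you do the reverse.
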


\section{The method of sub/super solutions}\label{MIS}
In this section we provide a generalization of the classical \emph{method of sub/supersolutions} (see for instance \cite{Sa}) taylored to find positive solutions of semilinear equations on a complete manifold with boundary $\dM$, with a possibly nonlinear boundary condition, that is
	\beq\label{equ}
	\begin{cases}
	\Delta u+f(x,u)= 0 & \hbox{on $\inn M$}\\
	\partial_{\nu}u-g(x,u)= 0 & \hbox{on $\partial M$}.
	\end{cases}
	\eeq
The global solutions will be obtained as limits of solutions defined on subsets of $M$, thus we need to introduce an adequate family of subsets, that at the same time has to be an exhaustion of $\inn M$ and of $\partial M$. We set the following
	\begin{defi}\label{dregex}
	Let $\dM$ be a complete manifold with boundary and $\left\{\Omega_n\right\}_{n\in\N}$ a family of relatively compact open subsets of $M$ with smooth boundary. Then we say that $\left\{\Omega_n\right\}_{n\in\N}$ is a \emph{$\partial$-regular} exhaustion of $M$ if it satisfies the following conditions:
		\begin{itemize}
		\item $\Omega_n\subset\subset\Omega_{n+1}$ for all $n\in\N$, and $\Omega_n\nearrow M$;
		\item $\partial_1\Omega_n\subset\subset\partial_1\Omega_{n+1}$ for all $n\in\N$, and $\partial_1\Omega_n\nearrow \partial M$;\\
		\end{itemize}
	\end{defi}
	\begin{rmk}
	On a complete manifold $\dM$ there always exists a $\partial$-regular exhaustion. Indeed, fix an origin $o\in M$ and define for each $n\in\N$
		\[
		B_n=B_n(o)\,,
		\]
	since $\dM$ is complete as a metric space, $B_n$ is an exhaustion satisfying the conditions of the definition above. The problem is that $\partial B_n$ need not to be smooth, thus we need to \emph{regularize} it. We claim that for all $n\in\N$ there exists a set $\Omega_n$ such that $B_n\subset\Omega_n\subset\subset B_{n+1}$ and $\partial\Omega_n$ is smooth. This follows from the approximation theory of Lipschitz submanifolds by smooth ones (see for instance \cite{Hi}) since $B_n\subset\subset B_{n+1}$, and $\partial B_n\in\Lip$. 
	\end{rmk}
To solve such nonlinear boundary value problems, we shall need a generalization of the monotone iteration scheme (see for instance \cite{Am} or \cite{Sa}) for semilinear elliptic equations with nonlinear boundary conditions. It is a generalization of Theorem 6.19 of \cite{MRS}. Let $\Omega$ be a bounded open domain, and let $f(x,s)\in\mathrm{C}^0(\Omega\times\R)$,  $g(x,s)\in\mathrm{C}^0(\partial\Omega\times\R)$. Assume that $\beta(x)\in\mathrm{C}^{1,\alpha}(\partial\Omega)$ and define the boundary operator $B$ acting on elements of $\mathrm{C}^1(\overline{\Omega})$ as
	\[
	Bu=\partial_{\nu}u+\beta(x)u\,.
	\]
Then $u_+\in\mathrm{C}^1(\overline{\Omega})$ is a supersolution of 
	\beq\label{bvpmis}
	\begin{cases}
	\Delta u+f(x,u)=0 & \hbox{in $\Omega$}\,,\\
	Bu=g(x,u)&\hbox{on $\partial\Omega$}\,.
	\end{cases}
	\eeq
if		
	\[
	\begin{cases}
	\Delta u_++f(x,u_+)\leq 0 & \hbox{in $\Omega$}\,,\\
	Bu_+\geq g(x,u_+)&\hbox{on $\partial\Omega$}\,.
	\end{cases}
	\]
where the first differential inequality has to be understood in the weak sense. The definition of a subsolution is obtained reversing the inequalities.
	\begin{thm}\label{satt}
	Let $\Omega$ be a relatively compact open domain with smooth boundary $\partial \Omega$. Let $f:\overline{\Omega}\times\R\rightarrow\R $ be a locally H\"older function such that $s\rightarrow f(x,s)$ is locally Lipschitz with respect to $s$ uniformly with respect to $x$ and $g(x,s)\in\mathrm{C}^{2,\alpha}(\partial\Omega\times\R)$. Suppose that $\varphi$ and $\psi\in\mathrm{C}^{1}(\overline{\Omega})$ are respectively a subsolution and a supersolution of \rf{bvpmis} satisfying
		\[
		\varphi\leq\psi\quad\hbox{on $\overline{\Omega}$}\,.
		\]
	Then \rf{bvpmis} has a solution $u\in\mathrm{C}^{2,\alpha}(\overline{\Omega})$ satisfying $\varphi\leq u\leq\psi$.
		
	\end{thm}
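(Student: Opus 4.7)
The plan is to carry out a monotone iteration scheme starting from the supersolution $\psi$. Since $\varphi, \psi \in \mathrm{C}^1(\overline{\Omega})$ and $\overline{\Omega}$ is compact, the closed interval $I = [\min_{\overline{\Omega}}\varphi,\max_{\overline{\Omega}}\psi]$ is bounded, and so the local Lipschitz regularity of $f(x,\cdot)$ (uniform in $x$) and the $\mathrm{C}^{2,\alpha}$ regularity of $g$ let me fix constants $k, K>0$ large enough that the maps $s\mapsto f(x,s)+ks$ and $s\mapsto g(x,s)+Ks$ are both non-decreasing on $I$ for every $x$, and so that $\beta(x)+K\geq 0$ on $\partial\Omega$.

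Set $u_0=\psi$ and, for each $n\geq 0$, define $u_{n+1}\in\mathrm{C}^{2,\alpha}(\overline{\Omega})$ to be the unique solution of the linear mixed problem
\begin{equation*}
\begin{cases}
\Delta u_{n+1}-ku_{n+1}=-\bigl(f(x,u_n)+ku_n\bigr) & \text{in }\Omega,\\
\partial_\nu u_{n+1}+\bigl(\beta(x)+K\bigr)u_{n+1}=g(x,u_n)+Ku_n & \text{on }\partial\Omega.
\end{cases}
\end{equation*}
Existence, uniqueness and $\mathrm{C}^{2,\alpha}$ regularity for this linear oblique boundary value problem are ensured by classical Schauder theory; the coercivity $k>0$ together with $\beta+K\geq 0$ yields a comparison principle for the linear operator. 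The core inductive step is to prove $\varphi\leq u_{n+1}\leq u_n$. Setting $w=u_n-u_{n+1}$, the supersolution inequality for $\psi$ (at the base step $n=0$) and, more generally, the monotonicity of $s\mapsto f(x,s)+ks$ and $s\mapsto g(x,s)+Ks$ combined with the inductive hypothesis $u_{n-1}\geq u_n$ yield $\Delta w-kw\leq 0$ in $\Omega$ and $\partial_\nu w+(\beta+K)w\geq 0$ on $\partial\Omega$, whence $w\geq 0$ by the comparison principle. The lower bound $u_{n+1}\geq\varphi$ is proved analogously, this time exploiting the subsolution inequality for $\varphi$ together with the same monotonicity properties.

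Being monotone decreasing and bounded below by $\varphi$, the sequence $(u_n)$ converges pointwise to a limit $u\in\mathrm{L}^{\infty}(\Omega)$ with $\varphi\leq u\leq\psi$. The uniform $\mathrm{L}^{\infty}$ bound on $(u_n)$, together with the local H\"older continuity of $f$ and the $\mathrm{C}^{2,\alpha}$ regularity of $g$, guarantees that the data of the linearized problem stay uniformly bounded in $\mathrm{C}^{0,\alpha}(\overline{\Omega})$ and $\mathrm{C}^{1,\alpha}(\partial\Omega)$ respectively; Schauder estimates for the mixed problem then yield a uniform $\mathrm{C}^{2,\alpha}$ bound on $(u_n)$. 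An Arzel\`{a}--Ascoli argument upgrades the pointwise convergence to $\mathrm{C}^{2,\beta}$ convergence on $\overline{\Omega}$ for any $\beta<\alpha$, and passage to the limit in the linearized equation shows that $u\in\mathrm{C}^{2,\alpha}(\overline{\Omega})$ is the sought solution of \rf{bvpmis}. I expect the main technical subtlety to lie in the invocation of the Schauder and maximum principle machinery for the oblique derivative problem: this theory is classical, but applying it cleanly requires the normalization $\beta+K\geq 0$, which is precisely why $K$ had to be chosen large at the outset even though $\beta$ itself is not assumed to be sign-definite.
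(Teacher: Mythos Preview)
Your proposal is correct and follows essentially the same monotone iteration scheme as the paper: both shift $f$ and $g$ by large constants to gain monotonicity, solve the resulting linear oblique-derivative problem via Schauder theory (the paper cites Theorem~6.30 of \cite{GT}), and use the comparison principle to trap the iterates between $\varphi$ and $\psi$. The only cosmetic differences are that the paper iterates from both $\varphi$ and $\psi$ simultaneously while you descend from $\psi$ alone, and that you make explicit the normalization $\beta+K\ge 0$ needed for the boundary maximum principle, which the paper leaves implicit.
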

		\begin{proof}
		Since $\varphi$ and $\psi$ are bounded, say $\varphi(\overline{\Omega}), \psi(\overline{\Omega})\subseteq[a,b]\subset\R$, there exist positive constants $H$ and $K$ such that the functions
			\[
			s\rightarrow f(x,s)+Hs=F(x,s)\,,
			\]
		and
			\[
			s\rightarrow g(x,s)+Ks=G(x,s)\,,
			\]
		are monotone increasing in $s\in[a,b]$ for every fixed $x$. For each $w\in\mathrm{C}^{\alpha}(\overline{\Omega})$ we let $v=Tw\in\mathrm{C}^{2,\alpha}(\overline{\Omega})$ be the solution of the boundary value problem
			\beq
			\begin{cases}
			(\Delta-H)v=-F(x,w) & \hbox{in $\Omega$}\,,\\
			(B+K)v=G(x,w)&\hbox{on $\partial\Omega$}\,.
			\end{cases}
			\eeq
		which exists and is unique by classical elliptic theory (see for instance \cite{GT}, Theorem 6.30). By the monotonicity of $F(x,s)$ and $G(x,s)$ it follows that the operator $T$ is monotone, that is, if $w_1\leq w_2$ on $\Omega$ then $Tw_1\leq Tw_2$. Indeed the function $\widetilde{v}=Tw_2-Tw_1$ satisfies
			\[
			\begin{cases}
			(\Delta-H)\widetilde{v}=-\left[F(x,w_2)-F(x,w_1)\right]\leq0 & \hbox{in $\Omega$}\,,\\
			(B+K)\widetilde{v}=\left[G(x,w_2)-G(x,w_1)\right]\geq 0&\hbox{on $\partial\Omega$}\,.
			\end{cases}
			\]
		and therefore, by the strong maximum principle $\widetilde{v}\geq 0$ on $\Omega$.
		
		Now we set $u_1^-=T\varphi$, $u_1^+=T\psi$, and for every $k\geq1$, $u_{k+1}^{\pm}=Tu_k^{\pm}$. Reasoning inductively as above we obtain
			\[
			\varphi\leq u_1^-\leq u_2^-\leq\dots\leq u_k^-\leq\dots\leq u_k^+\leq\dots\leq u_2^+\leq u_1^+\leq\psi\,.
			\]
		Thus there exist $u^-$ and $u^+$ such that $u_k^{\pm}\rightarrow u^{\pm}$. The regularity of $u^{\pm}$ and the fact that they are solutions of \rf{bvpmis} follow as in the proof of Theorem 6.19 of \cite{MRS}.
		\end{proof}
Thus we have the following result
		\begin{pro}\label{perron}
		Let $f:M\times\R\rightarrow\R $ be a locally H\"older function such that $s\rightarrow f(x,s)$ is locally Lipschitz with respect to $s$ uniformly with respect to $x$ and $g(x,s)\in\mathrm{C}^{2,\alpha}(\partial M\times\R^+)$. Suppose that $u^-$ and $u^+\in\mathrm{C}^{1}(M)$ are respectively a subsolution and a supersolution of \rf{equ} satisfying
		\[
		0\leq u^-<u^+\quad\hbox{on $M$}\,.
		\]
	Then \rf{equ} has a solution $u\in\mathrm{C}^{2}(M)$ satisfying $u^-\leq u\leq u^+$.
		\end{pro}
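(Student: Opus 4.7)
The plan is to construct $u$ as a limit of solutions of boundary value problems on a suitable exhaustion. First I fix a $\partial$-regular exhaustion $\{\Omega_n\}_{n\in\N}$ of $M$, whose existence was recorded in the remark following Definition \ref{dregex}. On each $\Omega_n$ I consider the mixed Zaremba-type problem
\begin{equation*}
\begin{cases}
\Delta v+f(x,v)=0 & \text{in } \Omega_n, \\
v=u^+ & \text{on } \partial_0\Omega_n, \\
\partial_\nu v=g(x,v) & \text{on } \partial_1\Omega_n.
\end{cases}
\end{equation*}
By assumption $u^+$ is a supersolution (the interior and Neumann inequalities hold, and the Dirichlet datum is met with equality), while $u^-$ is a subsolution because $u^-\leq u^+$ forces $u^-\leq u^+$ on $\partial_0\Omega_n$.

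The second step is to adapt the monotone iteration of Theorem \ref{satt} to this mixed setting. Writing $F(x,s)=f(x,s)+Hs$ and $G(x,s)=g(x,s)+Ks$ with $H,K>0$ chosen so that both maps are non-decreasing in $s$ on the compact range $[\inf u^-,\sup_{\overline{\Omega_n}}u^+]$, I define the operator $T$ that sends $w\in\mathrm{C}^{\alpha}(\overline{\Omega_n})$ to the unique $v=Tw$ solving the linear Zaremba problem
\begin{equation*}
\begin{cases}
(\Delta-H)v=-F(x,w) & \text{in } \Omega_n,\\
v=u^+ & \text{on } \partial_0\Omega_n,\\
\partial_\nu v+Kv=G(x,w) & \text{on } \partial_1\Omega_n,
\end{cases}
\end{equation*}
whose solvability and regularity are guaranteed by the Zaremba theory developed in the Appendix. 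The operator $T$ is monotone by the strong maximum principle applied up to $\partial_1\Omega_n$, so iterating $u_k^\pm=Tu_{k-1}^\pm$ with $u_0^-=u^-$ and $u_0^+=u^+$ produces monotone sequences sandwiched between $u^-$ and $u^+$ which converge to classical solutions; any such limit furnishes $u_n\in\mathrm{C}^2(\Omega_n)\cap\mathrm{C}^0(\overline{\Omega_n})$ with $u^-\leq u_n\leq u^+$ on $\overline{\Omega_n}$.

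The final step is a diagonal/compactness argument. Fix $k\in\N$ and consider $\{u_n\}_{n\geq k+2}$. These functions are uniformly bounded on $\Omega_k$ by $\sup_{\overline{\Omega_{k+2}}}u^+$, so interior Schauder estimates on $\inn M\cap\Omega_k$ and boundary Schauder estimates for the nonlinear oblique problem $\partial_\nu v=g(x,v)$ on $\partial_1\Omega_k$ (applied after a standard linearization of the $\mathrm{C}^{2,\alpha}$ boundary datum) yield a uniform $\mathrm{C}^{2,\alpha}$ bound on compact subsets of $\overline{\Omega_k}\setminus\partial_0\Omega_k$. A standard diagonal extraction then produces a subsequence converging in $\mathrm{C}^2_{\mathrm{loc}}$ on $\inn M\cup\partial M$ to a function $u\in\mathrm{C}^2(M)$ solving \rf{equ} on $M$ with $u^-\leq u\leq u^+$.

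The principal technical obstacle is handling the mixed Dirichlet--nonlinear-Neumann problem on each $\Omega_n$: the linear Zaremba theory (pulled from the Appendix) is what makes the iteration scheme of Theorem \ref{satt} go through despite the interface $\partial_0\Omega_n\cap\partial_1\Omega_n$, where one must be careful about regularity near this codimension-two ``corner''. Once $u_n$ is constructed on every $\Omega_n$, the passage to the limit is routine, and the strict inequality $u^-<u^+$ is used only to ensure that one can always pick $H,K$ uniformly on the compact ranges.
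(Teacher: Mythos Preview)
Your approach differs substantially from the paper's, and as written it has a real gap.

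The paper deliberately \emph{avoids} the mixed Dirichlet--Neumann problem you set up on each $\Omega_n$. Instead it imposes a single nonlinear oblique condition on all of $\partial\Omega_n$, with a modified nonlinearity
\[
g_n(x,w)=\psi_n(x)\,g(x,w)+A_n\bigl(1-\psi_n(x)\bigr)\frac{m(x)-w}{\delta(x)},
\]
where $\psi_n$ is a cutoff supported in $\partial_1\Omega_n$ and equal to $1$ on an inner piece $\Gamma_n\Subset\partial_1\Omega_n$ with $\Gamma_n\nearrow\partial M$, $m=\tfrac12(u^++u^-)$, $\delta=\tfrac12(u^+-u^-)$, and $A_n$ is a large constant. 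Two things are gained: (i) the problem on $\Omega_n$ is a \emph{pure} oblique problem, so Theorem~\ref{satt} applies verbatim via the classical linear theory (Theorem~6.30 in \cite{GT}), with no Zaremba machinery needed; (ii) on the region where $\psi_n\equiv1$, namely $\Gamma_n$, the solution already satisfies the genuine boundary condition $\partial_\nu u=g(x,u)$, and since $\Gamma_n\nearrow\partial M$ this suffices for the diagonal limit. Note also that the strict inequality $u^-<u^+$ is essential here, because $\delta>0$ enters the very definition of $g_n$; your remark that it is used ``only to ensure that one can always pick $H,K$'' misidentifies its role.

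Your Zaremba route is not absurd in principle, but two steps are unjustified. First, the Appendix treats only the Zaremba \emph{eigenvalue} problem with homogeneous data; it does not supply existence or $\mathrm C^{2,\alpha}$ regularity for the inhomogeneous linear problem $(\Delta-H)v=-F$ with Dirichlet data on $\partial_0\Omega_n$ and Robin data on $\partial_1\Omega_n$ that your operator $T$ requires, so invoking ``the Zaremba theory developed in the Appendix'' is a genuine gap. Second, solutions of mixed Dirichlet--Neumann problems typically fail to be $\mathrm C^2$ across the interface $\partial_0\Omega_n\cap\partial_1\Omega_n$; the boundary Schauder estimates you appeal to near that codimension-two corner are not standard and cannot be dismissed as routine. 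This is precisely the difficulty the paper's cutoff construction is designed to sidestep. If you want to rescue your argument, you must either import a bona fide mixed-problem regularity theory with corner compatibility, or---much more simply---adopt the paper's device of interpolating the boundary condition with a cutoff so that only a pure oblique problem has to be solved on each $\Omega_n$.
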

			\begin{proof}
			Let $\left\{\Omega_n\right\}_{n\in\N}$ be a $\partial$-regular exhaustion of $M$. Then there exists a family $\left\{\Gamma_n\right\}_{n\in\N}$ of relatively compact open subsets of $\partial M$ such that $\Gamma_n\subset\subset\partial_1\Omega_n$ and $\Gamma_n\nearrow\partial M$. Now, consider a family of cutoff functions $\left\{\psi_n\right\}_{n\in\N}$ such that $\psi_n\in\mathrm{C}^{\infty}(\partial\Omega_n)$ and
				\begin{itemize}
				\item $\supp\psi_n\subset\subset\partial_1\Omega_n$;
				\item $0\leq\psi_n\leq1$;
				\item $\psi_n\equiv 1$ on $\Gamma_n$.
				\end{itemize}
			Now, for each $n\in\N$ we introduce the following family of problems
				\beq\label{Pn}
				\begin{cases}
				\Delta w+f(x,w)= 0 & \hbox{on $\inn \Omega_n$}\\
				\partial_{\nu}w-g_n(x,w)= 0 & \hbox{on $\partial \Omega_n$}\,,
				\end{cases}
				\eeq
			here $g_n(x,w)$ is
				\[
				g_n(x,w)=\psi_n(x)g(x,w)+A_n\left(1-\psi_n(x)\right)\left(\frac{m(x)-w}{\delta(x)}\right)
				\]
			where $m(x)=\frac{1}{2}\left(u^++u^-\right)$, $\delta(x)=\frac{1}{2}\left(u^+-u^-\right)>0$, and
				\[
				A_n=\max\left\{\max_{x\in\partial_1\Omega_n}g(x,u^-),\max_{x\in\partial\Omega_n}\partial_{\nu}u^-,\max_{x\in\partial_1\Omega_n}\left(g(x,u^+)\right)_-,\max_{x\in\partial\Omega_n}\left(\partial_{\nu}u^+\right)_-\right\}\,.
				\]
			We claim that $u^+$ is a supersolution of \rf{Pn} for each $n\in\N$; since the first inequality is trivially satisfied we are left to check the boundary condition, that is,
				\beqs
				\begin{aligned}
				\partial_{\nu}u^+-g_n(x,u^+) & =\psi_n(x)\left[\partial_{\nu}u^+-g(x,u^+)\right]+\left(1-\psi_n(x)\right)\left[\partial_{\nu}u^++A_n\right]\\
				& \geq \left(1-\psi_n(x)\right)\left[\partial_{\nu}u^++A_n\right]\\
				& \geq \left(1-\psi_n(x)\right)\left[\left(g(x,u^+)+A_n\right)\chi_{\partial_1\Omega_n}+\left(\partial_{\nu}u^++A_n\right)\chi_{\partial_0\Omega_n}\right]\\
				& \geq 0\,,
				\end{aligned}
				\eeqs
			where, the first and the second inequalities follow from the facts that $\supp\psi_n\subset\subset\partial_1\Omega_n$ (we recall that $\chi_V$ is the characteristic function of the set $V$), and that $u^+$ is a supersolution, the last inequality is a consequence of the definition of $A_n$. Analougusly it can be shown that $u^-$ is a subsolution of \rf{Pn}. From the monotone iteration scheme (see for instance Theorem \ref{satt}) it follows that there exist a family $\left\{u_n\right\}_{n\in\N}$ of positive solutions of the problems \rf{Pn}. By standard elliptic regularity theory (see for instance \cite{GT}, Theorem 6.31) and the fact that $\Gamma_n\subset\subset\Gamma_{n+1}$, it follows that for $m\geq n$, $u_m\in\mathrm{C}^{2,\alpha}(\inn\Omega_n\cup\Gamma_n)$ and are uniformly bounded there, thus, up to a subsequence, $u_m\rightarrow u^*_n\in\mathrm{C}^2(\inn\Omega_n\cup\Gamma_n)$. Now, since $\inn\Omega_n\cup\Gamma_n\nearrow M$, we can arrange a diagonal subsequence such that $u_m\rightarrow u\in\mathrm{C}^2(M)$ positive solution of \rf{equ}.
			\end{proof}

\section{Construction of a supersolution}\label{seclichsup}
In this section we prove that, under suitable spectral assumptions and assuming a control on the coefficients, we can find a positive supersolution of \rf{u+-}. Namely, we have the following

	\begin{pro}\label{thmsup}
	Let $\dM$ be complete, $a(x)$, $b(x)$, $c(x)\in\mathrm{C}_{loc}^{0,\alpha}(M)$ for some $0<\alpha\leq1$, $c(x)\geq0$, and suppose that $B_0$ is compact. Let $\sigma$, $\tau\in\R$ be such that $\tau<1<\sigma$. Let $g(x,t)\in\mathrm{C}^0(\partial M\times\R^+)$ satisfy
		\beq\label{condg}
		\begin{dcases}
		i)\quad\quad\exists\, \overline{\gamma}>0\,:\,\sup_{x\in\partial M}g(x,\gamma)\leq0\quad\hbox{for all $\gamma\geq\overline{\gamma}$}\\
		ii)\quad\quad\lim_{t\rightarrow\infty}\frac{g(x,t)}{t}=-\infty\,.
		\end{dcases}		
		\eeq
	Assume that
		\beq\label{condsup}
		\begin{dcases}
		i)\quad\quad\eigz(B_0)>0\\
		ii)\quad\quad\limsup_{x\rightarrow\infty}\frac{a_+(x)+c(x)}{b_+(x)}<+\infty\,.
		\end{dcases}		
		\eeq
	Then there exists $\theta_0\in(0,1]$ such that for each $\theta\in(0,\theta_0]$ there exists $u\in \mathrm{C}^2(\inn M)\cap\mathrm{C}^0(M)\cap\mathrm{L}^{\infty}(M)$ positive solution of
	\beq\label{u+}
	\begin{cases}
	\Delta u+a(x)u-b_{\theta}(x)u^{\sigma}+c(x)u^{\tau}\leq 0 & \hbox{on $\inn M$}\\
	\partial_{\nu}u-g(x,u)\geq 0 & \hbox{on $\partial M$}.
	\end{cases}
	\eeq
	\end{pro}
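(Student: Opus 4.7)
The plan is to construct a positive bounded supersolution $u_+$ of \eqref{u+} by pasting a Zaremba-eigenfunction bump supported near $B_0$ onto a large positive constant on the rest of $M$. Using hypothesis \eqref{condsup}(i) and the definition of $\eigz(B_0)$ as the supremum of $\eigz(\Omega)$ over bounded open $\Omega\supset B_0$, first fix a bounded $\Omega$ with smooth boundary such that $B_0\subset\subset\Omega$ and $\lambda:=\eigz(\Omega)>0$, and let $v\in C^2(\overline{\Omega})$ be the positive Zaremba eigenfunction of \eqref{eizar}; by strict positivity and compactness, there exist constants $v_M\geq v_m>0$ with $v_m\leq v\leq v_M$ on $\overline{B_0}$. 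Then I would fix a smooth cutoff $\chi\in C^\infty(M)$ identically $1$ on a neighborhood $U_1$ of $\overline{B_0}$, supported in a slightly larger $U_2\subset\subset\Omega$, and with $\partial_\nu\chi=0$ on $\partial M$, and look for $u_+$ of the form $u_+:=K\chi v+\gamma$ with parameters $\gamma\geq\overline{\gamma}$, $K>0$ and $\theta_0\in(0,1]$ to be chosen.

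The verification splits into three regions. On $U_1$, where $u_+=Kv+\gamma$, the Zaremba equation gives $\Delta u_++au_+=-K\lambda v+a\gamma$, so on $\overline{B_0}$ (where $b_\theta=-\theta b_-$) the interior inequality becomes
\[
K\lambda v\geq a\gamma+\theta b_-(Kv+\gamma)^\sigma+c(Kv+\gamma)^\tau.
\]
Since $\tau<1$ and $v\geq v_m$ here, choose $K$ large (scaling linearly with $\gamma$) so that $K\lambda v_m$ dominates the $a$-term and the sublinear $c$-term, and then pick $\theta_0$ small enough that $\theta_0\|b_-\|_{\mathrm{L}^\infty(\overline{\Omega})}(Kv_M+\gamma)^\sigma\leq K\lambda v_m/2$, securing the inequality on $\overline{B_0}$ for every $\theta\in(0,\theta_0]$. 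On $U_1\setminus\overline{B_0}$, where $b_+\geq 0$, the extra term $-b_+(Kv+\gamma)^\sigma$ only helps, with $v$ still bounded below near $\overline{B_0}$ by continuity. On the cutoff annulus $U_2\setminus U_1$ the commutator $2K\g{\nabla\chi}{\nabla v}+Kv\Delta\chi$ is $O(K)$; however, this region is a compact subset of $\Omega\setminus\overline{B_0}$ on which $b_+\geq b_{\min}>0$, so $-b_+(K\chi v+\gamma)^\sigma\leq -b_{\min}\gamma^\sigma$ absorbs the $O(K)\sim O(\gamma)$ error once $\gamma$ is large (using $\sigma>1$). Finally, outside $U_2$ one has $u_+=\gamma$, and hypothesis \eqref{condsup}(ii) together with the lower bound on $b_+$ on compacts of $M\setminus U_1$ yields the interior inequality for $\gamma$ sufficiently large. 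For the Neumann part, $\partial_\nu u_+=0$ on $\partial M$ by the Zaremba condition $\partial_\nu v=0$ on $\inn\partial_1\Omega$ and the choice $\partial_\nu\chi\equiv 0$ on $\partial M$; then \eqref{condg}(i) yields $g(x,u_+)\leq 0$ since $u_+\geq\gamma\geq\overline{\gamma}$.

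The main obstacle is the coupled scheduling of the parameters: one must fix $\Omega$ and $\chi$ first, then take $K=C\gamma$ with $C$ depending only on $\lambda,v_m,v_M$ and $\|a\|_{\mathrm{L}^\infty(\overline{\Omega})}$, then choose $\gamma$ large (simultaneously to absorb the cutoff commutator on $U_2\setminus U_1$ and to make the constant a supersolution on $M\setminus U_2$), and finally take $\theta_0$ small enough to kill the perturbation $\theta b_-u_+^\sigma$ on $\overline{B_0}$. Provided this ordering is carried out, $u_+\in C^\infty(M)\cap\mathrm{L}^\infty(M)$ is the desired positive supersolution of \eqref{u+} for every $\theta\in(0,\theta_0]$.
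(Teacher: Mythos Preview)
Your approach is essentially the same strategy as the paper's---paste a Zaremba eigenfunction near $B_0$ onto a large constant elsewhere via a cutoff---but with a different ansatz: you use the additive form $u_+=K\chi v+\gamma$, whereas the paper takes the interpolated form $u=\eta\bigl(\psi u_1+(1-\psi)\mu\bigr)$, so that on the inner region the paper has $u=\eta u_1$ (pure scaled eigenfunction, with the $a$-term fully absorbed by the eigenfunction equation) while you carry the extra $+\gamma$ and must then dominate the resulting $a\gamma$ term by taking $K\sim C\gamma$. This leads to a different parameter scheduling: the paper balances $-\lambda+\theta\|b_-\|_{\infty}\eta^{\sigma-1}+\xi\|c\|_{\infty}\eta^{\tau-1}$ via an explicit calculus lemma (their Lemma~\ref{pqlem}) and lets $\eta=\overline t(\theta)\nearrow\infty$ as $\theta\searrow0$, so the supersolution actually depends on $\theta$; your ordering (fix $C$, then $\gamma$ large, then $\theta_0$ small) is more elementary, avoids the lemma, and produces a single $u_+$ valid for all $\theta\le\theta_0$. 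The other genuine difference is on $\partial M$: by arranging $\partial_\nu\chi=0$ you get $\partial_\nu u_+\equiv0$ everywhere and need only \eqref{condg}(i), whereas the paper does not impose this on its cutoff and must invoke \eqref{condg}(ii) to control the nonzero normal derivative on the annulus $\partial_1(D\setminus\overline{D'})$. Two small points to tighten: the bounds $v_m,v_M$ should be taken over $\overline{U_1}$ rather than $\overline{B_0}$, since you invoke them on all of $U_1$; and the existence of a cutoff with $\partial_\nu\chi=0$ on $\partial M$, while true (work in a collar and make $\chi$ constant in the normal direction near $\partial M$), deserves a line of justification.
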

	
	\begin{proof}
	Since $\dM$ is a complete metric space, $B_0$ is bounded. By the definition of $\eigz$ and assumption \rf{condsup} i) we can choose two bounded open sets $D'$ and $D$ such that $\partial D\cap\inn M$ is smooth,
		\beqs
		B_0\subset\subset D'\subset\subset D\quad\quad\hbox{and}\quad\quad\eigz(D)>0\,.
		\eeqs
	Let $u_1$ be the positive solution of 
			\beqs
			\begin{cases}
			\Delta u_1+a(x)u_1+\eigz(D)u_1=0 & \hbox{on $D$}\\
			u_1=0 & \hbox{on $\partial_0 D$}\\
			\partial_{\nu} u_1=0 & \hbox{on $\partial_1 D$}
			\,,
			\end{cases}
			\eeqs
		such that $\left\|u_1\right\|_{\mathrm{L}^{\infty}(D)}=1$.	Now we choose a cut-off function $\psi\in\mathrm{C}_0^{\infty}(M)$ such that $0\leq\psi\leq1$, $\psi\equiv1$ on $D'$, and $\supp\psi\subset D$.
		For positive constants $\eta$, $\mu$, we define
			\beq\label{cutoff}
			u=\eta\left(\psi u_1+(1-\psi)\mu\right)\,.
			\eeq
		We claim that it is possible to choose $\eta$, $\mu$ and $\theta_0$ appropriately so that $u$ becomes a positive solution of \rf{u+}. First we analyze the case $\left\|c\right\|_{\mathrm{L}^{\infty}(D')}\neq 0$, the alternative case being simpler will be discussed later.
		First of all we consider the behaviour of $u$ in $\inn M$. We define $\xi=\left(\inf_{D'}u_1\right)^{\tau-1}>0$. Since $\eigz(D)>0$ and using the fact that $\left\|u_1\right\|_{\mathrm{L}^{\infty}(D)}=1$, on $\overline{D'}$ we have
			\beqs
			\begin{aligned}
			Lu-b_{\theta}(x)u^{\sigma}+c(x)u^{\tau} & = L\left(\eta u_1\right)-b_{\theta}(x)\left(\eta u_1\right)^{\sigma}+c(x)\left(\eta u_1\right)^{\tau}\\
			& \leq \left(\eta u_1\right)\left[-\eigz(D)+\theta b_-(x)\left(\eta u_1\right)^{\sigma-1}+c(x)\left(\eta u_1\right)^{\tau-1}\right]\\
			& \leq \left(\eta u_1\right)\left[-\eigz(D)+\theta \left\|b_-\right\|_{\mathrm{L}^{\infty}(M)}\eta^{\sigma-1}+\xi\left\|c\right\|_{\mathrm{L}^{\infty}(D')}\eta^{\tau-1}\right]
			\end{aligned}
			\eeqs
		To study the sign of the RHS of the above inequality  we need the following elementary calculus lemma.
		\begin{lem}\label{pqlem}
		Let $A$, $B$, $C$ be positive constants. For $t\in\R^+$ consider the function
			\[
			f(t)=At^p+Bt^q-C
			\] 
		where $q<0<p$. Define the positive constant
			\[
			M(p,q)=\left(-\frac{q}{p}\right)^{p\slash(p-q)}+\left(-\frac{q}{p}\right)^{q\slash(p-q)}\,.
			\]
		If 
			\beq\label{suflem}
			A^{-q}B^p<\left(\frac{C}{M(p,q)}\right)^{p-q}
			\eeq
		then $f(t)$ attains a negative minimum at the point
			\beq\label{minlem}
			\overline{t}=\left(-\frac{qB}{pA}\right)^{1\slash(p-q)}.
			\eeq
		\end{lem}
		\begin{proof}[Proof of Lemma \ref{pqlem}]
		Since $\displaystyle{\lim_{t\rightarrow 0^+}f(t)=\lim_{t\rightarrow+\infty}f(t)=+\infty}$, it follows that $f(t)$ attains an absolute minimum at some point $\overline{t}\in\R^+$ such that $f'(\overline{t})=0$, where
			\[
			f'(t)=pAt^{p-1}+qBt^{q-1}\,.
			\]
		It follows that
			\[
			\overline{t}=\left(-\frac{qB}{pA}\right)^{1\slash(p-q)}\,.
			\]
		Condition \rf{suflem} guarantees that $f(\overline{t})<0$.
		\end{proof}
		Going back to the proof of Proposition \ref{thmsup} we set $A=\theta \left\|b_-\right\|_{\mathrm{L}^{\infty}(M)}$, $B=\xi\left\|c\right\|_{\mathrm{L}^{\infty}(D')}$, $C=\eigz(D)$, $p=\sigma-1$, and $q=\tau-1$. In this case, \rf{suflem} and \rf{minlem} read as functions of $\theta$ respectively as
			\[
			\theta<M_1
			\]
		and
			\[
			\overline{t}=\overline{t}(\theta)=\theta^{1\slash(\tau-\sigma)}M_2\,,
			\]
		where $M_1$ and $M_2$ are positive constants depending only on $\left(b(x),c(x),D, D', \sigma, \tau\right)$. We note that $\overline{t}(\theta)\nearrow+\infty$ as $\theta\downarrow0$, thus, there exists $0<\theta_*<M_1$ such that $\overline{t}(\theta)>1$ for $\theta<\theta_*$. Setting 
			\beq
			\eta=\overline{t}(\theta)
			\eeq 
		in \rf{cutoff}, we deduce
			\beq
			Lu-b_{\theta}(x)u^{\sigma}+c(x)u^{\tau}\leq 0 \quad\quad \hbox{on $\overline{D'}\cap \inn M$}\,,
			\eeq
		for each $\theta\leq\theta_*$.
		
		If $\left\|c\right\|_{\mathrm{L}^{\infty}(D')}=0$ we proceed as above, inferring that on $\overline{D}$
			\beqs
			\begin{aligned}
			Lu-b_{\theta}(x)u^{\sigma}+c(x)u^{\tau} & \leq \left(\eta u_1\right)\left[-\eigz(D)+\theta \left\|b_-\right\|_{\mathrm{L}^{\infty}(M)}\eta^{\sigma-1}\right]\,.
			\end{aligned}
			\eeqs
		In this case it is easier to analyze the RHS, indeed it is apparent that it has an absolute minimum at $\eta=0$ with a negative value if
			\[
			\eta<\widetilde{t}(\theta)=\left(\frac{\eigz(D)}{\theta \left\|b_-\right\|_{\mathrm{L}^{\infty}(M)}}\right)^{1\slash (\sigma-1)}\,.
			\] 
		Since we are interested in a positive $u$, we set, to fix ideas,
			\[
			\eta=\frac{1}{2}\widetilde{t}({\theta})\,.
			\] 
		
		Next we consider $M \setminus D$. Since $\supp\psi\subset D$, it follows that $u=\eta\mu$ there. Thus, using \rf{condsup} $ii)$, there exists a $\Gamma>0$ such that, 
		for $\theta<\theta_*$ we have
			\beqs
			\begin{aligned}
			L u-b_{\theta}(x)u^{\sigma}+c(x)u^{\tau} & \leq  a_+(x)\eta\mu-b_+(x)\left(\eta\mu\right)^{\sigma}+c(x)b_+(x)\left(\eta\mu\right)^{\tau}\\
			& \leq b_+(x)\eta\mu\left[\frac{a_+}{b_+}(x)-\left(\eta\mu\right)^{\sigma-1}+\frac{c}{b_+}(x)\left(\eta\mu\right)^{\tau-1}\right]\\
			& \leq b_+(x)\eta\mu\left[\Gamma-\left(\eta\mu\right)^{\sigma-1}+\Gamma\left(\eta\mu\right)^{\tau-1}\right]\\
			\end{aligned}
			\eeqs 	
		and there exists a positive constant $\Lambda_0$ (not depending on $\theta$) such that the RHS is negative for $\mu>\Lambda_0$.\\
		It remains to analyze the situation on $D\setminus\overline{D'}$. First of all we note that, by standard elliptic regularity theory (see \cite{GT}), $u_1\in\mathrm{C}^2(D)$. Thus, since $\supp\psi\subset D$, it follows that $u\in\mathrm{C}^2(M)$, in particular this implies that there exist positive constants $H$, $K$ such that
			\beq\label{HK}
			\begin{cases}
			L u\leq\eta H & \hbox{on $D\setminus\overline{D'}$}\,,\\
			\partial_{\nu}u\geq-\eta K &\hbox{on $\partial_1\left(D\setminus\overline{D'}\right)$}\,,
			\end{cases}
			\eeq
		As a consequence, on $D\setminus\overline{D'}$ we have
			\beqs
			\begin{aligned}
			L u-b_{\theta}(x)u^{\sigma}+c(x)u^{\tau} & \leq \eta H-b(x)\eta^{\sigma}\left(\psi u_1+\left(1-\psi\right)\mu\right)^{\sigma}\\
			& \quad\quad+c(x)\eta^{\tau}\left(\psi u_1+\left(1-\psi\right)\mu\right)^{\tau}\,.
			\end{aligned}
			\eeqs
		Because of our choices of $\psi$ and $D$, there exist positive constants $\ep$ and $E$ such that
			\beqs
			\begin{aligned}
			& \inf_{D\setminus\overline{D'}}b(x)\left(\psi u_1+\left(1-\psi\right)\mu\right)^{\sigma}\geq\ep\,,\\
			& \sup_{D\setminus\overline{D'}}c(x)\left(\psi u_1+\left(1-\psi\right)\mu\right)^{\tau}\leq E\,.
			\end{aligned}
			\eeqs
		Therefore, on $D\setminus\overline{D'}$
			\beqs
			L u-b_{\theta}(x)u^{\sigma}+c(x)u^{\tau}\leq\eta\left(H-\ep\eta^{\sigma-1}+E\eta^{\tau-1}\right)\,.
			\eeqs
		Since $\sigma>1$ and $\tau<1$, we deduce the existence of a constant $\Lambda_1>0$ depending only on $D$ and $D'$ such that
			\beqs
			H-\ep\eta^{\sigma-1}+E\eta^{\tau-1}\leq0
			\eeqs
		for $\eta\geq\Lambda_1$.\\
		
		Now, from \rf{cutoff} and \rf{HK} we infer 
			\[
			\partial_{\nu}u-g(x,u)=-g(x,u)\quad\quad\hbox{on $\partial M\setminus\partial_1\left(D\setminus\overline{D'}\right)$}
			\]
		and
			\[
			\partial_{\nu}u-g(x,u)\geq -g(x,u)-\eta K\quad\quad\hbox{on $\partial_1\left(D\setminus\overline{D'}\right)$}\,.
			\]
		Since $u_1>0$ on $D$, $\supp\psi\subset D$, and $\mu>0$, it holds that
			\[
			\rho=\inf_{x\in\partial M}\left(\psi u_1+(1-\psi)\mu\right)>0
			\]
		thus, from \rf{condg} i) it follows 
			\beq
			g(x,u)\leq 0 \quad\quad\hbox{on $\partial M$}
			\eeq 
		for $\eta\geq\overline{\gamma}\slash\rho$. Moreover
			\[
			-g(x,u)-\eta K\geq -\eta\rho\left[\frac{g(x,u)}{u}+\frac{K}{\rho}\right]\quad\quad\hbox{on $\partial M$}\,,
			\]
		since $\partial_1\left(D\setminus\overline{D'}\right)$ is bounded, $g(x,t)$ is uniformly continous there (as a function of $x$), thus from \rf{condg} ii) it follows that there exists a constant $\Lambda_2>0$ such that the RHS of the inequality above is non-negative for $\eta\geq\Lambda_2$.\\
		
		Since $\overline{t}(\theta)\nearrow+\infty$ monotonically as $\theta\searrow 0^+$, there exists a $0<\theta_0<\theta_*$ with the property that
			\beqs
			\eta=t_*(\theta)\geq\max\left\{1,\Lambda_1,\Lambda_2,\overline{\gamma}\slash\rho\right\}
			\eeqs
		for $\theta\leq\theta_0$. With this choice of $\theta$ and consequently of $\eta$, and with the previous choice of $\mu$, $u$ is the desired solution of \rf{u+}.
		\end{proof}
		\begin{rmk}\label{stac}
		The solution $u$ of \rf{u+} constructed above is bounded below by a positive constant, in other words
		\[
		\ds{u_*=\inf_{x\in M}u>0}\,.
		\]
		\end{rmk}
		\begin{rmk}
		If $B_0\subset\inn M$ (that is $\partial B_0\subset\inn M$), the mixed spectral condition \rf{condsup} i) can be substituted with the usual Dirichlet spectral condition
			\[
			\eigb(B_0)>0\,.
			\]
		\end{rmk}
		
\section{Construction of a subsolution}\label{secsubl}
In this section we produce positive (and bounded) subsolutions to equation \rf{u+-}. The proof is based on two elementary observations. The first is that since $b_+(x)\geq b_{\theta}(x)$ for any $\theta>0$, then any positive subsolution of
	\beq\label{lich+}
		\begin{cases}
		\Delta u+a(x)u-b_{+}(x)u^{\sigma}+c(x)u^{\tau}= 0 & \hbox{on $\inn M$}\\
		\partial_{\nu}u-g(x,u)= 0 & \hbox{on $\partial M$},
		\end{cases}
	\eeq
is also a subsolution for \rf{u+-}.\\
The second is that equation \rf{lich+} has an interesting symmetry property, indeed, let $a(x)$, $b(x)$, $c(x)\in\mathrm{C}_{loc}^{0,\alpha}(M)$ for some $0<\alpha\leq1$, and $\sigma$, $\tau\in\R$ satisfying $\tau<1<\sigma$. Setting 
	\[
	\begin{cases}
	\overline{a}(x)=-a(x)\\
	\overline{b}(x)=c(x)\\
	\overline{c}(x)=b_+(x)\\
	\overline{\tau}=2-\sigma\\
	\overline{\sigma}=2-\tau
	\end{cases}
	\]
it follows that also $\overline{a}(x)$, $\overline{b}(x)$, $\overline{c}(x)\in\mathrm{C}_{loc}^{0,\alpha}(M)$, and $\overline{\tau}<1<\overline{\sigma}$. Now, suppose that $v\in\mathrm{L}^{\infty}(M)$ is a positive supersolution of 
	\beq\label{licho}
		\begin{cases}
		\Delta u+\overline{a}(x)u-\overline{b}(x)u^{\overline{\sigma}}+\overline{c}(x)u^{\overline{\tau}}= 0 & \hbox{on $\inn M$}\\
		\partial_{\nu}u-\overline{g}(x,u)= 0 & \hbox{on $\partial M$},
		\end{cases}
	\eeq
 where
 	\beq\label{go}
 	\overline{g}(x,t)=-t^2g\left(x,\frac{1}{t}\right)\,,
 	\eeq
 then a simple computation shows that $\ds{u^-=\frac{1}{v}}$ is a positive subsolution of \rf{lich+}.\\
 	\begin{pro}\label{thmsub}
	Let $\dM$ be complete $a(x)$, $b(x)$, $c(x)\in\mathrm{C}_{loc}^{0,\alpha}(M)$ for some $0<\alpha\leq1$, $c(x)\geq0$, and suppose that $C_0$ is compact. Let $\sigma$, $\tau\in\R$ satisfy $\tau<1<\sigma$. Let $g(x,t)\in\mathrm{C}^0(\partial M\times\R^+)$ be such that
		\beq\label{condgo}
		\begin{dcases}
		i)\quad\quad\exists\, \overline{\omega}>0\,:\,\inf_{x\in\partial M}g(x,\omega)\geq0\quad\hbox{for all $0<\omega\leq\overline{\omega}$}\\
		ii)\quad\quad\lim_{s\rightarrow 0^+}\frac{g(x,s)}{s}=+\infty\,.
		\end{dcases}		
		\eeq
	Assume
		\beq\label{condsupo}
		\begin{dcases}
		i)\quad\quad\zeta_1^{\overline{L}}(C_0)>0\\
		ii)\quad\quad\limsup_{x\rightarrow\infty}\frac{a_-(x)+b_+(x)}{c(x)}<+\infty\,,
		\end{dcases}		
		\eeq
	where $\overline{L}=\Delta-a(x)$. Then there exists $u\in\mathrm{C}^2(\inn M)\cap\mathrm{C}^0(M)\cap\mathrm{L}^{\infty}(M)$ positive solution of
	\beq\label{u-}
	\begin{cases}
	\Delta u+a(x)u-b_{+}(x)u^{\sigma}+c(x)u^{\tau}\geq 0 & \hbox{on $\inn M$}\\
	\partial_{\nu}u-g(x,u)\leq 0 & \hbox{on $\partial M$}.
	\end{cases}
	\eeq
	\end{pro}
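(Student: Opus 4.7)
The plan is to exploit the duality indicated by the authors just before the statement: a positive bounded subsolution of \rf{u-} is produced by inverting a positive bounded supersolution of the dual equation \rf{licho}, and such a supersolution is supplied directly by Proposition \ref{thmsup} applied to the dual data.

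Concretely, I would set $\overline{a} = -a$, $\overline{b} = c$, $\overline{c} = b_+$, $\overline{\sigma} = 2-\tau$, $\overline{\tau} = 2-\sigma$, and $\overline{g}(x,t) = -t^2 g(x,1/t)$, and then verify that these data satisfy the hypotheses of Proposition \ref{thmsup} on $\dM$. The H\"older regularity of $\overline{a}, \overline{b}, \overline{c}$, the sign $\overline{c} = b_+ \geq 0$, and the inequality $\overline{\tau} < 1 < \overline{\sigma}$ are immediate. Since $c\geq 0$ we have $\overline{b}_- \equiv 0$, so $\{\overline{b}\leq 0\} = C_0$ is compact by hypothesis; moreover the $\theta$-perturbation in Proposition \ref{thmsup} becomes trivial ($\overline{b}_\theta = \overline{b}$ for every $\theta\in(0,1]$), so Proposition \ref{thmsup} can be invoked with no smallness requirement. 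The Schr\"odinger operator associated to $\overline{a}$ is $\Delta + \overline{a} = \Delta - a = \overline{L}$, so \rf{condsup} i) for the dual problem is exactly \rf{condsupo} i). The ratio condition \rf{condsup} ii) for the dual problem reads $\limsup_{x\to\infty}(\overline{a}_+ + \overline{c})/\overline{b}_+ = \limsup_{x\to\infty}(a_- + b_+)/c < +\infty$, i.e.\ \rf{condsupo} ii). Finally, \rf{condg} for $\overline{g}$ follows from \rf{condgo} via the change of variable $s = 1/t$: if $t \geq 1/\overline{\omega}$ then $1/t \leq \overline{\omega}$, so $g(x,1/t) \geq 0$ and therefore $\overline{g}(x,t) = -t^2 g(x,1/t) \leq 0$; and $\overline{g}(x,t)/t = -g(x,s)/s$ with $s = 1/t \to 0^+$ as $t\to\infty$, so the limit is $-\infty$.

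Proposition \ref{thmsup} then produces a positive $v \in \mathrm{C}^2(\inn M)\cap \mathrm{C}^0(M)\cap \mathrm{L}^\infty(M)$ which is a supersolution of \rf{licho}, and Remark \ref{stac} guarantees $v_* = \inf_M v > 0$. I would then set $u := 1/v$, which is automatically positive, bounded above by $1/v_*$, and of the same regularity as $v$. A direct computation using the identity $\Delta(1/u) = -u^{-2}\Delta u + 2 u^{-3}|\nabla u|^2$, substituting $v = 1/u$ in the dual interior inequality and multiplying through by $-u^2$, yields
\[
\Delta u + a(x) u - b_+(x) u^\sigma + c(x) u^\tau \;\geq\; 2 u^{-1}|\nabla u|^2 \;\geq\; 0,
\]
which is the interior part of \rf{u-}. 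On the boundary, $\partial_\nu u = -v^{-2} \partial_\nu v$ combined with $\partial_\nu v \geq \overline{g}(x,v) = -v^2 g(x,1/v)$ yields at once $\partial_\nu u - g(x,u) \leq 0$.

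The argument is essentially formal once the duality is set up, and I do not anticipate any serious obstacle. The two points requiring genuine care are (i) a systematic check that every hypothesis of Proposition \ref{thmsup} translates correctly through the substitution, and (ii) the observation that the extra first-order term $2 u^{-1}|\nabla u|^2$ generated by the inversion has the favorable sign for the subsolution inequality; it is precisely this sign that makes the inversion trick produce a subsolution (rather than obstructing it), and it is also the reason why the trick is not symmetric between super- and subsolutions.
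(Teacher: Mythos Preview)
Your proposal is correct and follows exactly the duality argument the paper uses: verify that \rf{condgo} and \rf{condsupo} translate into \rf{condg} and \rf{condsup} for the barred data (with the $\theta$-perturbation trivialized by $\overline{b}_-\equiv 0$), apply Proposition \ref{thmsup} to obtain $v$, and set $u=1/v$. You have in fact supplied more detail than the paper's own proof, including the favorable sign of the gradient term and the use of Remark \ref{stac} to ensure $u\in\mathrm{L}^\infty(M)$.
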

		\begin{proof}
		The result follows easily from the observations made above, simply noting that \rf{condgo} implies the validity of \rf{condg} for $\overline{g}(x,t)$ defined in \rf{go}, while \rf{condsupo} corresponds to conditions \rf{condsup} for equation \rf{licho}. Thus there exists $v\in \mathrm{C}^2(\inn M)\cap\mathrm{C}^0(M)\cap\mathrm{L}^{\infty}(M)$ positive solution of \rf{licho} from which we obtain $\ds{u=\frac{1}{v}\in \mathrm{C}^2(\inn M)\cap\mathrm{C}^0(M)\cap\mathrm{L}^{\infty}(M)}$ positive solution of \rf{u-}.
		\end{proof}
\section{Proofs of the main results}\label{secbdl}	
Now we put together the results of the previous sections to prove the existence of prositive solutions. 
		\begin{proof}[Proof of Theorem A]
		By \rf{condgall} and \rf{condsupall} it follows that the hypoteses of Proposition \ref{thmsup} and Proposition \ref{thmsub} are satisfied, thus there exist $u^+$, $u^-\in \mathrm{C}^2(\inn M)\cap\mathrm{C}^0(M)\cap\mathrm{L}^{\infty}(M)$ respectively a supersolution of \rf{u+-} and a subsolution of \rf{u-} (and thus also a subsolution of \rf{u+-}). Morever by Remark \ref{stac} we can assume that there exists a $m>0$ such that $u^+\geq m$. Now, for $s\in(0,1)$, set $u_s=su^-$. We claim that $u_s$ is still a subsolution of \rf{u-}, indeed
			\beqs
			\begin{aligned}
			Lu_s-b_{+}(x)u_s^{\sigma}+c(x)u_s^{\tau} & = s \left(Lu^--b_{+}(x)s^{\sigma-1}(u^-)^{\sigma}+c(x)s^{\tau-1}(u^-)^{\tau}\right)\\
			& \geq s\left(Lu^--b_{+}(x)(u^-)^{\sigma}+c(x)(u^-)^{\tau}\right)\\
			& \geq 0\,,
			\end{aligned}
			\eeqs
		on $M$, and since $u_s<u^-$, it follows from the monotonicity of $\ds{\frac{g(x,t)}{t}}$ that
			\beqs
			\begin{aligned}
			\partial_{\nu}u_s-g(x,u_s)& = s \partial_{\nu}u^--g(x,u_s) \\
			&\leq sg(x,u^-)-g(x,u_s)\\
			& =u_s\left(\frac{g(x,u^-)}{u^-}-\frac{g(x,u_s)}{u_s}\right)\\
			&\leq 0,
			\end{aligned}
			\eeqs
		on $\partial M$. Thus $u_s$ is still a subsolution of \rf{u-} for any $s\in(0,1)$. In particular, choosing $\ds{s<\frac{m}{\sup_{x\in M}u^-}}$, we have that $u_s\in \mathrm{C}^2(\inn M)\cap\mathrm{C}^0(M)\cap\mathrm{L}^{\infty}(M)$ is a subsolution of \rf{u+-} such that $0<u^-<u^+$, thus we can apply Proposition \ref{perron} to get the desired positive solution $u\in \mathrm{C}^2(\inn M)\cap\mathrm{C}^0(M)\cap\mathrm{L}^{\infty}(M)$ of \rf{u+-}.
		\end{proof}

The proof of Corollary B is a straightforward application of Theorem A.
		\begin{proof}[Proof of Corollary B]
		The corollary follows from Theorem A once we show that conditions \rf{defoc} implies the validity of \rf{condgall} and the monotonicity of $\ds{\frac{g(x,t)}{t}}$.
		First of all, condition \rf{defoc} i) means that, for any fixed $x\in M$, $\ds{\frac{g(x,t)}{t}}$ is a non-increasing function of $t\in\R^+$, indeed
			\[
			\frac{\partial}{\partial t}\left(\frac{g(x,t)}{t}\right)=\frac{\partial}{\partial t}\sum_{i=1}^Ng_i(x)t^{q_i-1}=\sum_{i=1}^N(q_i-1)g_i(x)t^{q_i-2}\leq0\,.
			\]
		Now, for all $x\in M$
			\[
			\frac{g(x,t)}{t}=g_k(x)t^{q_k-1}+\sum_{i\neq k}g_i(x)t^{q_i-1}\,,
			\]
		but from \rf{defoc} i), ii) it follows that the second summand is bounded above by a positive constant, while
			\[
			\lim_{t\rightarrow+\infty}g_k(x)t^{q_k-1}=-\infty\,,
			\]
		that is, \rf{condgall} iv) is satisfied. Moreover for $\gamma\in\R^+$
			\beqs
			\begin{aligned}
			g(x,\gamma) &=g_k(x)\gamma\left[\sum_{q_i>1}\frac{g_i(x)}{g_k(x)}\gamma^{q_i-1}+\sum_{q_i\leq 1}\frac{g_i(x)}{g_k(x)}\gamma^{q_i-1}\right]\\
			& \leq g_k(x)\gamma\left[\gamma^{q_k-1}-\sum_{q_i\leq 1}\left\|\frac{g_i(x)}{g_k(x)}\right\|_{\mathrm{L}^{\infty}(\partial M)}\gamma^{q_i-1}\right]\\
			\end{aligned}
			\eeqs
		and there exists a $\overline{\gamma}>0$ such that the quantity in square brackets is positive for $\gamma>\overline{\gamma}$, thus \rf{condgall} ii) follows. Conditions \rf{condgall} i), and iii) can be derived similarly.
		\end{proof}

\section{Appendix}
Here we adapt the proof of Theorem 8.38 of \cite{GT} to obtain the following existence result for the Zaremba problem.
	\begin{pro}
	Let $a(x)\in\mathrm{C}^0(M)$ and $\Omega\subset M$ be a non-empty bounded open set such that $\partial\Omega$ is Lipschitz and $\partial_1\Omega$ is at least $\mathrm{C}^2$. Then there exists a unique positive solution $v\in\mathrm{C}^{0,\alpha}(\overline{\Omega})\cap\mathrm{W}^{2,2}(\Omega')$ of
		\beq\label{appeq}
		\begin{cases}
		\Delta v+a(x)v+\eigz(\Omega)v=0 & \hbox{on $\Omega$}\\
		v=0 & \hbox{on $\partial_0\Omega$}\\
		\partial_{\nu}v=0 & \hbox{on $\inn\partial_1\Omega$}\\
		\left\|v\right\|_{\mathrm{L}^2(\Omega)}=1\,,&\,
		\end{cases}
		\eeq
	for some $\alpha>0$ and for all $\Omega'\subset\subset\left(\Omega\cup\partial_1\Omega\right)$, where $\eigz(\Omega)$ is defined as
		\beq\label{appvar}
		\eigz(\Omega)=\inf\left\{\int_{\Omega}\left|\nabla\varphi\right|^2-a(x)\varphi^2\,:\,\varphi\in\mathrm{W}_0^{1,2}(\Omega\cup\partial_1\Omega)\,,\int_{\Omega}\varphi^2=1\right\}\,.
		\eeq
	\end{pro}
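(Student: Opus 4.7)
The plan is to argue by the direct method in the calculus of variations. Set $H=\mathrm{W}_0^{1,2}(\Omega\cup\partial_1\Omega)$ and define the quadratic form
\beqs
Q(\varphi)=\int_{\Omega}\left|\nabla\varphi\right|^2-a(x)\varphi^2\,,
\eeqs
whose infimum on $\{\varphi\in H:\|\varphi\|_{\mathrm{L}^2(\Omega)}=1\}$ is $\eigz(\Omega)$ by \rf{appvar}. First I would check that the infimum is finite: since $a(x)\in\mathrm{C}^0(M)$ is bounded on $\overline{\Omega}$, one has $Q(\varphi)\geq -\|a\|_{\mathrm{L}^\infty(\Omega)}$ on the constraint set, so $\eigz(\Omega)>-\infty$. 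Pick a minimizing sequence $\{v_n\}\subset H$ with $\|v_n\|_{\mathrm{L}^2(\Omega)}=1$ and $Q(v_n)\to\eigz(\Omega)$. The bound on $Q$ and the normalization give a uniform $\mathrm{W}^{1,2}$ bound; since $\partial\Omega$ is Lipschitz, Rellich--Kondrachov extracts a subsequence converging weakly in $\mathrm{W}^{1,2}(\Omega)$ and strongly in $\mathrm{L}^2(\Omega)$ to some $v\in H$ (the closed subspace $H$ is weakly closed). Strong $\mathrm{L}^2$ convergence preserves the normalization and passes the $a(x)\varphi^2$ term to the limit; lower semicontinuity of the Dirichlet integral under weak convergence yields $Q(v)\leq\eigz(\Omega)$, so $v$ is a minimizer.

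Next I would derive the Euler--Lagrange equation. Testing with arbitrary $\varphi\in H$ and using a Lagrange multiplier for the $\mathrm{L}^2$ constraint gives
\beqs
\int_{\Omega}\left\langle\nabla v,\nabla\varphi\right\rangle-a(x)v\varphi-\eigz(\Omega)v\varphi=0\qquad\forall\varphi\in H\,,
\eeqs
which is exactly the weak formulation of \rf{appeq}: the vanishing on $\partial_0\Omega$ is encoded in $v\in H$, while the Neumann condition on the smooth part $\inn\partial_1\Omega$ comes from the fact that test functions need not vanish there. To replace $v$ by a positive minimizer, I would observe that $|v|\in H$ attains the same energy and same norm (since $|\nabla|v||=|\nabla v|$ a.e.); hence $|v|$ is also a minimizer and weakly solves the same equation. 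Interior Harnack for the Schr\"odinger operator $L+\eigz(\Omega)$ then promotes $|v|$ to a strictly positive function on $\Omega$, so up to sign we may assume $v>0$. Uniqueness of the positive minimizer (up to normalization) follows by the standard Picone/orthogonality argument applied to any other $L^2$-normalized positive solution.

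For the regularity claim, interior $\mathrm{W}^{2,2}_{\mathrm{loc}}$ regularity on $\inn\Omega$ is standard. Near $\inn\partial_1\Omega$, where $\partial\Omega$ is $\mathrm{C}^2$ and only the Neumann condition is active, I would apply the boundary $\mathrm{W}^{2,2}$ estimates for conormal problems (Theorems 8.10 and 8.12 of \cite{GT}, adapted to manifolds via a boundary chart) to obtain $v\in\mathrm{W}^{2,2}(\Omega')$ for every $\Omega'\subset\subset(\Omega\cup\partial_1\Omega)$. Finally, global $\mathrm{C}^{0,\alpha}(\overline{\Omega})$ continuity is obtained from De Giorgi--Nash--Moser-type H\"older estimates for weak solutions of mixed boundary value problems; the delicate point is precisely the junction $\overline{\partial_0\Omega}\cap\overline{\partial_1\Omega}$, where the boundary condition changes type and no $\mathrm{C}^{2,\alpha}$ estimate is available, but H\"older continuity with some small exponent $\alpha>0$ survives under the Lipschitz regularity of $\partial\Omega$ thanks to the classical theory of Stampacchia for mixed problems (see also the discussion preceding Theorem 8.38 in \cite{GT}).

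The main obstacle is exactly this last step: obtaining global H\"older regularity across the Dirichlet--Neumann interface. On the smooth portions of the boundary the estimates are textbook, but the transition set $\overline{\partial_0\Omega}\cap\overline{\partial_1\Omega}$ needs a dedicated argument (reflection across $\inn\partial_1\Omega$ to convert Neumann into an interior condition, followed by Moser iteration applied to the reflected equation with a Dirichlet datum on the reflected portion of $\partial_0\Omega$) to produce the exponent $\alpha$ and the estimate $\|v\|_{\mathrm{C}^{0,\alpha}(\overline{\Omega})}\leq C\|v\|_{\mathrm{L}^2(\Omega)}$.
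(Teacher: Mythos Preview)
Your proposal is correct and follows essentially the same strategy as the paper: the direct method on the Rayleigh quotient over $H=\mathrm{W}_0^{1,2}(\Omega\cup\partial_1\Omega)$, followed by regularity theory and a Harnack/positivity argument. Two minor technical differences are worth noting. First, to pass to the limit you invoke weak lower semicontinuity of the Dirichlet energy, whereas the paper instead uses the parallelogram identity for $Q$ to show that the minimizing sequence is Cauchy in $\mathrm{W}^{1,2}$ and hence converges \emph{strongly}; both are standard and equivalent here. Second, for $\mathrm{W}^{2,2}$ regularity up to $\inn\partial_1\Omega$ you appeal to the conormal boundary estimates of \cite{GT}, while the paper flattens the boundary, performs an even reflection of $v\circ\Psi$ across $\{x_n=0\}$, and applies the \emph{interior} estimate (Theorem~8.8 of \cite{GT}) on the full ball---this has the advantage of reducing everything to a single interior result. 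Your caution about the Dirichlet--Neumann interface is well founded, but the paper dispatches global $\mathrm{C}^{0,\alpha}(\overline{\Omega})$ in one stroke by citing Theorem~8.29 of \cite{GT}, which is formulated precisely for such mixed boundary value problems.
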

	\begin{proof}
	The strategy of the proof consists in showing that a minimizer of \rf{appvar} is a solution of \rf{appeq}. Thus, for $0\neq u\in H:=\mathrm{W}_0^{1,2}(\Omega\cup\partial_1\Omega)$, we define
		\[
		Q(u)=\int_{\Omega}\left|\nabla u\right|^2-a(x)u^2
		\] 
	and the Rayleigh quotient $J(u)$ as
		\[
		J(u)=\frac{Q(u)}{\int_{\Omega} u^2}\,,
		\]
	since $a(x)\in\mathrm{C}^0(M)$, it follows that $J$ is bounded from below and 
		\[
		\eigz(\Omega)=\inf_{u\in H}J(u)\,.
		\]
	Now choose a minimizing sequence $\left\{v_n\right\}\subset H$ such that $\left\|v_n\right\|_{\mathrm{L}^2(\Omega)}=1$. Thus
		\[
		\int_{\Omega}\left|\nabla v_n\right|^2=J(v_n)+\int_{\Omega}a(x)v_n^2\leq J(v_n)+\left\|a\right\|_{\mathrm{L}^{\infty}(\Omega)}<\eigz(\Omega)+1+\left\|a\right\|_{\mathrm{L}^{\infty}(\Omega)}
		\]
	for $n$ big enough, this implies that $\left\{v_n\right\}$ is a bounded sequence in $\mathrm{W}^{1,2}(\Omega)$. Since $\partial\Omega$ is Lipschitz, the embedding $\mathrm{W}^{1,2}(\Omega)\hookrightarrow\mathrm{L}^2(\Omega)$ is compact by the Kondrakov theorem (see for instance Theorem 7.26 of \cite{GT}), thus there exists a subsequence $\left\{v_n\right\}$ which converges in $\mathrm{L}^2(\Omega)$ to a function $v$. A simple computation shows that
		\[
		Q\left(\frac{v_n-v_m}{2}\right)+Q\left(\frac{v_n+v_m}{2}\right)=\frac{1}{2}\left(Q(v_n)+Q(v_m)\right)\,.
		\]
	Thus
		\beqs
		\begin{aligned}
		Q\left(\frac{v_n-v_m}{2}\right)& =\frac{1}{2}\left(Q(v_n)+Q(v_m)\right)-J\left(\frac{v_n+v_m}{2}\right)\left\|\frac{v_n+v_m}{2}\right\|^2_{\mathrm{L}^{2}(\Omega)}\\
		& \leq\frac{1}{2}\left(Q(v_n)+Q(v_m)\right)-\eigz(\Omega)\left\|\frac{v_n+v_m}{2}\right\|^2_{\mathrm{L}^{2}(\Omega)}\,,
		\end{aligned}
		\eeqs
	and the RHS tends to $0$ as $n,m\rightarrow\infty$. This last computation, together with the fact that $\left\{v_n\right\}$ converges in $\mathrm{L}^2(\Omega)$, implies that the RHS of
		\[
		\int_{\Omega}\left|\nabla (v_n-v_m)\right|^2=4Q\left(\frac{v_n-v_m}{2}\right)+\left\|a\right\|_{\mathrm{L}^{\infty}(\Omega)}\left\|v_n-v_m\right\|_{\mathrm{L}^{2}(\Omega)}
		\]
	tends to $0$ as $n,m\rightarrow\infty$, showing that $\left\{v_n\right\}$ is a Cauchy sequence in $\mathrm{W}^{1,2}(\Omega)$. Since $H$ is a closed subspace of $\mathrm{W}^{1,2}(\Omega)$ we can conclude that the limit $v$ is in $H$. Now, a standard argument in calculus of variations shows that		
		\beq\label{EL}
		\int_{\Omega}\g{\nabla v}{\nabla\varphi}-\left(a(x)+\eigz(\Omega)\right)v\varphi=0\,,
		\eeq
	for each $\varphi\in\mathrm{C}^1_0(\Omega\cup\partial_1\Omega)$, that is, $v\in H$ satisfies the Euler-Lagrange equation \rf{appeq} weakly. From Theorem 8.29 of \cite{GT} it follows that $v\in\mathrm{C}^{0,\alpha}(\overline{\Omega})$, while from Theorem 8.8 of \cite{GT} we have that $v\in\mathrm{W}^{2,2}(\Omega')$ for each $\Omega'\subset\subset\Omega$. It remains to show that $v$ is of class $\mathrm{W}^{2,2}$ on open subsets $\Omega'\subset\Omega$ crossing the boundary $\partial_1\Omega$. To show this, consider $x_0\in\inn\partial_1\Omega$, then there exist a neighborhood of $x_0$, $U=U(x_0)$ and a $\mathrm{C}^2$ map $\Psi:\R^m\rightarrow M$ such that it is a diffeomorphism in a neighborhood of $B_1^+(0)$, the upper half-ball of radius $1$, and
		\[
		\Psi(B_1^+(0))=U\,,\quad\quad\Psi(B_1^+(0)\cap\left\{x_n=0\right\})=\partial_1 U\,.
		\]
	The function $w=v\circ\Psi$ belongs to $\mathrm{W}^{1,2}(B_1^+(0))$ and satisfies
		\[
		\int_{B_1^+}A(\nabla w,\nabla\varphi)-\left((a\circ\Psi)(x)+\eigz(\Omega)\right)w\varphi=0\,,
		\]
	for each $\varphi\in\mathrm{C}^1_0(B^+_1)$, where $A(\,,\,)$ is a bounded, symmetric $2$-form. Thus, defining	
		\beqs
		\widetilde{w}(x)=
		\begin{cases}
		w(x_1,x_2,\dots,x_{n-1},x_n)& \hbox{if $x_n\geq0$}\\
		w(x_1,x_2,\dots,x_{n-1},-x_n)& \hbox{if $x_n<0$}\,,\\
		\end{cases}
		\eeqs
	we have that $\widetilde{w}\in\mathrm{W}^{1,2}(B_1(0))$ and the divergence theorem implies that
		\[
		\int_{B_1}A(\nabla \widetilde{w},\nabla\varphi)-\left((a\circ\Psi)+\eigz(\Omega)\right)\widetilde{w}\varphi=0\,,
		\]
	for each $\varphi\in\mathrm{C}^1_0(B_1)$. Another application of Theorem 8.8 of \cite{GT} shows that $\widetilde{w}\in\mathrm{W}^{2,2}(\widetilde{\Omega})$ for each $\widetilde{\Omega}\subset\subset B_1(0)$, in turn, this implies that $v\in\mathrm{W}^{2,2}(\Omega')$ for each $\Omega'\subset\subset\left(\Omega\cup\partial_1\Omega\right)$. Now, using the divergence theorem and the Gagliardo trace theorem (see for instance Theorem 4.12 of \cite{AF}) we can rewrite \rf{EL} as	
		\[
		\int_{\Omega}\varphi\left(\Delta v+\left(a(x)+\eigz(\Omega)\right)v\right)-\int_{\partial_1\Omega}\varphi\partial_{\nu}v=0
		\]
	for each $\varphi\in\mathrm{C}^1_0(\Omega\cup\partial_1\Omega)$. Since $\varphi$ is arbitrary, this last equation implies the validity of \rf{appeq}. The uniqueness of the eigenfunction $v$ is a consequence of the positivity, this is a standard fact and follows from the fact that also $\left|v\right|\in H$ is a minimizer of the Rayleigh quotient and from the Harnack inequality (see for instance Corollary 8.21 of \cite{GT}).
	\end{proof}

\end{document}